\def\tr{\mathop{\rm tr}\nolimits}
\def \build#1#2#3{\mathrel{\mathop{#1}\limits^{#2}_{#3}}}
\renewenvironment{abstract}
                 {\vspace{6pt}
                  \begin{center}
                  \begin{minipage}{5in}
                  \centerline{\textbf{Abstract}}
                  \noindent\ignorespaces
                 }
                 {\end{minipage}\end{center}}
\newtheorem{theorem}{\textbf{Theorem}}[section]
\theoremstyle{definition}
\newtheorem{remark}{\textbf{Remark}}[section]
\title{\Large \textbf{Multimatricvariate distribution under elliptical models}}
\author{
  \textbf{Jos\'e A. D\'{\i}az-Garc\'{\i}a} \thanks{Corresponding author\newline
   {\bf Key words.}  Bimatrix variate, matricvariate, matrix variate, random matrices, matrix variate elliptical distributions.\newline
    2000 Mathematical Subject Classification. 62E15; 60E05}\\
  {\normalsize Universidad Aut\'onoma de Chihuahua} \\
  {\normalsize Facultad de Zootecnia y Ecolog\'{\i}a} \\
  {\normalsize Perif\'erico Francisco R. Almada Km 1, Zootecnia} \\
  {\normalsize 33820 Chihuahua, Chihuahua, M\'exico}\\
  {\normalsize E-mail: jadiaz@uach.mx}\\
  \textbf{Francisco J. Caro-Lopera}\\
  {\normalsize Departament of Basic Sciences} \\
  {\normalsize Universidad de Medell\'{\i}n} \\
  {\normalsize Medell\'{\i}n, Colombia} \\
  {\normalsize E-mail: fjcaro@udem.edu.co} \\[2ex]
}
\date{}
\begin{document}
\maketitle

\begin{abstract}
A new family of matrix variate distributions indexed by elliptical models are proposed in this work. The
so called \emph{multimatricvariate distributions} emerge as a generalization of the bimatrix variate
distributions based on matrix variate gamma distributions and independence. Some properties and special
cases of the multimatricvariate distributions are also derived.  Two new interesting Jacobians in the
area are also provided. Finally, an application for time dependent data of DNA molecules is studied.
\end{abstract}

\section{Introduction}\label{sec:1}
New challenges for modeling random variables in the real world, demand advanced statistical analysis of
the experiments and the phenomena. In that sense, the statistical literature proposes a number of
approaches dealing with complex situations. For example, the assumption of normality has ruled the
statistics for decades, but the existence of multiple situations which do not fulfill such requirement
has forced new insights based on robust distributions. For example, the elliptical contoured
distributions provide a wide class of flexible functions with different kurtosis, tails and properties.
This family includes most of the well known univariate, multivariate or matrix variate symmetric
distributions, such as: normal, Pearson VII (t and Cauchy)and II, Laplace, Bessel, Kotz, among many
others.

The elliptical distributions constitute a robust approach against normality but also demand advanced
theory for dealing with probabilistic dependent models. The literature reports that including the
dependency is not an easy task, and  the theorists prefers models under a weak initial condition based on
independence. Then the experts, users of the statistics, do not have any choice for modeling under
dependence.

For example, some experiments involving two random variables, say $X$ and $Y$, usually assume independent
probabilistic densities; in this case, $dF_{X}(x)$ and $dG_{Y}(y)$, respectively. Then, the joint density
function of $X$ and $Y$, is just given by the product  $dF_{X}(x)dG_{Y}(y)$.

However, the complex phenomena generally do not accept such independence assumption.  For example,
consider the temperature and rainfall in a context of certain model in hydrology. For practical uses the
variables are considered independent ruled by a normal and a gamma distribution, respectively. But any
expert knows that the occurrence of one depends on the occurrence of the other variable.

Then we ask for a joint distribution when $X$ and $Y$ are not probabilistically independent.  Moreover,
we demand that the marginal distributions correspond to the distributions under independence.

In the univariate case, several bivariate type distributions has been proposed, the list includes the
pairs: gamma-beta, gamma-gamma, beta-beta among others. They have been applied in hydrology, finance and
other areas, see for example \citet{ln:82},\citet{cn:84}, \citet{ol:03}, \citet{n:07, n:13}  and
\citet{spj:14} and the references therein. In a general setting, the bimatrix variate type distribution
have been studied by several authors, see \citet{or:64}, \citet{dggj:10a, dggj:10b,dggj:11}, and
\citet{brea:11}, and the references therein . In particular, \citet{e:11} presents a complete exposition
of the advances in this difficult topic. As usual the univariate and multivariate cases are the most
common in literature, but the matrix variate case counts few works.

Moreover, there is an important problem in the referred literature of the univariate, multivariate and
matrix variate cases. The associated density functions, are based on the joint density function of three
independent random variables (scalar, vector or matrix). Then under certain change of variables the
"dependent" model is obtained. Clearly, the underlying initial independence keeps the model far from an
authentic probabilistic dependent model.

Alternatively, the present article, provides a natural extension to $n$ random matrices in the context of
a real subjacent probabilistic dependence.  In this case we will start with the joint density function of
$k$ dependent probabilistically random matrices. Then,  we propose the joint density function of two or
more random matrices under dependence, where the corresponding marginals distribution are known. This
will be termed \textit{multimatricvariate distributions}. In Section \ref{sec:2}  new results on special
Jacobians and some integrals are derived. The central results are presented in Section \ref{sec:3}. Some
properties of multimatricvariate distributions are obtained in Section \ref{sec:4}. Finally an
application of DNA is showed in Section \ref{sec:5}.

\section{Preliminary results}\label{sec:2}

Some basic results about computations of Jacobians and density functions are provided next.

\begin{theorem}\label{teo1}
Let $\mathbf{X} \in \Re_{n \times m}$ and $\mathbf{Y} \in \Re_{n \times m}$, matrices with mathematically
independent elements.
\begin{description}
  \item[i)] Define $\mathbf{Y} = \mathbf{X} (\mathbf{I}_{m}-\mathbf{X}'\mathbf{X})^{-1/2}$. Then
     \begin{equation}\label{eq2}
        (d\mathbf{Y}) = \left|\mathbf{I}_{m} - \mathbf{X}'\mathbf{X}\right|^{-(n+m+1)/2} (d\mathbf{X}).
     \end{equation}
  \item[ii)] If $\mathbf{X} = \mathbf{Y}(\mathbf{I}_{m}+\mathbf{Y}'\mathbf{Y})^{-1/2}$, we have
     \begin{equation}\label{eq3}
        (d\mathbf{X}) = \left|\mathbf{I}_{m} + \mathbf{Y}'\mathbf{Y}\right|^{-(n+m+1)/2} (d\mathbf{Y}),
     \end{equation}
\end{description}
where $n \geq m$.
\end{theorem}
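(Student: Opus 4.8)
The plan is to route both identities through the positive definite symmetric matrices $\mathbf{S}=\mathbf{X}'\mathbf{X}$ and $\mathbf{T}=\mathbf{Y}'\mathbf{Y}$, on which the transformation becomes a simple rational map, and then to transfer the Jacobian back to the full matrices via the standard volume–element factorization on the Stiefel manifold. For \textbf{i)}, note first that $(\mathbf{I}_{m}-\mathbf{X}'\mathbf{X})^{-1/2}$ is a matrix function of $\mathbf{S}$ and hence commutes with $\mathbf{S}$; therefore
\[
  \mathbf{T}=\mathbf{Y}'\mathbf{Y}=(\mathbf{I}_{m}-\mathbf{S})^{-1/2}\,\mathbf{S}\,(\mathbf{I}_{m}-\mathbf{S})^{-1/2}=\mathbf{S}(\mathbf{I}_{m}-\mathbf{S})^{-1},
\]
so that $\mathbf{I}_{m}+\mathbf{T}=(\mathbf{I}_{m}-\mathbf{S})^{-1}$ and $\mathbf{S}=\mathbf{I}_{m}-(\mathbf{I}_{m}+\mathbf{T})^{-1}$. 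Writing the (generic, rank $m$) matrix $\mathbf{X}$ as $\mathbf{X}=\mathbf{H}_{1}\mathbf{S}^{1/2}$ with $\mathbf{H}_{1}'\mathbf{H}_{1}=\mathbf{I}_{m}$, the same commutativity gives $\mathbf{Y}=\mathbf{H}_{1}\mathbf{S}^{1/2}(\mathbf{I}_{m}-\mathbf{S})^{-1/2}=\mathbf{H}_{1}\mathbf{T}^{1/2}$; that is, $\mathbf{X}$ and $\mathbf{Y}$ share the same orientation factor $\mathbf{H}_{1}$.

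Next I would invoke the well-known factorization of the volume element: for an $n\times m$ matrix $\mathbf{Z}$ of rank $m$ with $n\geq m$ and $\mathbf{W}=\mathbf{Z}'\mathbf{Z}$,
\[
  (d\mathbf{Z})=2^{-m}\,\left|\mathbf{W}\right|^{(n-m-1)/2}(d\mathbf{W})\,(\mathbf{H}_{1}'d\mathbf{H}_{1}).
\]
Applying this to $\mathbf{X}$ and to $\mathbf{Y}$ and taking the ratio, the Stiefel factors $(\mathbf{H}_{1}'d\mathbf{H}_{1})$ cancel and one is left with $(d\mathbf{Y})/(d\mathbf{X})=\big(|\mathbf{T}|/|\mathbf{S}|\big)^{(n-m-1)/2}\,(d\mathbf{T})/(d\mathbf{S})$. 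To evaluate $(d\mathbf{T})/(d\mathbf{S})$ I would use $\mathbf{S}=\mathbf{I}_{m}-(\mathbf{I}_{m}+\mathbf{T})^{-1}$: with $\mathbf{U}=\mathbf{I}_{m}+\mathbf{T}$ (so $(d\mathbf{U})=(d\mathbf{T})$) and the Jacobian of the symmetric matrix inverse, $(d\,\mathbf{U}^{-1})=|\mathbf{U}|^{-(m+1)}(d\mathbf{U})$, we get $(d\mathbf{S})=|\mathbf{I}_{m}+\mathbf{T}|^{-(m+1)}(d\mathbf{T})$. Finally, from $\mathbf{T}=\mathbf{S}(\mathbf{I}_{m}-\mathbf{S})^{-1}$ one has $|\mathbf{T}|=|\mathbf{S}|\,|\mathbf{I}_{m}-\mathbf{S}|^{-1}$ and $|\mathbf{I}_{m}+\mathbf{T}|=|\mathbf{I}_{m}-\mathbf{S}|^{-1}$, and collecting exponents,
\[
  \frac{(d\mathbf{Y})}{(d\mathbf{X})}=\left|\mathbf{I}_{m}-\mathbf{S}\right|^{-(n-m-1)/2}\left|\mathbf{I}_{m}-\mathbf{S}\right|^{-(m+1)}=\left|\mathbf{I}_{m}-\mathbf{X}'\mathbf{X}\right|^{-(n+m+1)/2},
\]
since $(n-m-1)/2+(m+1)=(n+m+1)/2$, which is (\ref{eq2}). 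For \textbf{ii)}, the map $\mathbf{X}=\mathbf{Y}(\mathbf{I}_{m}+\mathbf{Y}'\mathbf{Y})^{-1/2}$ is exactly the inverse of the one in i): with $\mathbf{T}=\mathbf{Y}'\mathbf{Y}$ and $\mathbf{S}=\mathbf{X}'\mathbf{X}=\mathbf{T}(\mathbf{I}_{m}+\mathbf{T})^{-1}$ one again gets $\mathbf{I}_{m}-\mathbf{S}=(\mathbf{I}_{m}+\mathbf{T})^{-1}$, so inverting the Jacobian of i) gives $(d\mathbf{X})=|\mathbf{I}_{m}-\mathbf{X}'\mathbf{X}|^{(n+m+1)/2}(d\mathbf{Y})=|\mathbf{I}_{m}+\mathbf{Y}'\mathbf{Y}|^{-(n+m+1)/2}(d\mathbf{Y})$, which is (\ref{eq3}).

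The step I expect to require the most care is the reduction to $\mathbf{S}$ and $\mathbf{T}$: one must justify that $\mathbf{X}$ and $\mathbf{Y}$ carry the same Stiefel factor $\mathbf{H}_{1}$ (this is exactly where commutativity of $\mathbf{S}$ with its matrix functions is used) and keep track of the open sets on which these maps are diffeomorphisms — in i) the ``matrix ball'' $\{\mathbf{X}:\mathbf{I}_{m}-\mathbf{X}'\mathbf{X}>0\}$ together with the genericity condition $\rank\mathbf{X}=m$ needed for the decomposition $\mathbf{X}=\mathbf{H}_{1}\mathbf{S}^{1/2}$, and similarly for ii). Once these points are in place, the remaining determinant bookkeeping of exponents is routine.
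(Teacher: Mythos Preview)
Your proposal is correct and follows essentially the same route as the paper: both reduce to the symmetric matrices $\mathbf{S}=\mathbf{X}'\mathbf{X}$ and $\mathbf{T}=\mathbf{Y}'\mathbf{Y}$ via the Stiefel-manifold volume factorization $(d\mathbf{Z})=2^{-m}|\mathbf{W}|^{(n-m-1)/2}(d\mathbf{W})(\mathbf{H}_{1}'d\mathbf{H}_{1})$, observe that $\mathbf{T}=(\mathbf{I}_{m}-\mathbf{S})^{-1}-\mathbf{I}_{m}$, apply the symmetric-matrix inverse Jacobian $(d\mathbf{T})=|\mathbf{I}_{m}-\mathbf{S}|^{-(m+1)}(d\mathbf{S})$, and collect exponents. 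The only cosmetic difference is in how the Stiefel factors are identified: the paper writes separate elements $\mathbf{H}_{1},\mathbf{G}_{1}$ and invokes uniqueness of the nonnormalised Haar measure to equate $(\mathbf{H}_{1}'d\mathbf{H}_{1})=(\mathbf{G}_{1}'d\mathbf{G}_{1})$, whereas you argue directly that $\mathbf{Y}=\mathbf{H}_{1}\mathbf{T}^{1/2}$ with the \emph{same} $\mathbf{H}_{1}$ as in $\mathbf{X}=\mathbf{H}_{1}\mathbf{S}^{1/2}$, using commutativity of $\mathbf{S}$ with its matrix functions; your version is slightly more explicit but the content is the same.
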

\begin{proof}
\textbf{i}) Define $\mathbf{A} = \mathbf{Y}'\mathbf{Y} =
(\mathbf{I}_{m}-\mathbf{X}'\mathbf{X})^{-1/2}\mathbf{X}'\mathbf{X}(\mathbf{I}_{m}-\mathbf{X}'\mathbf{X})^{-1/2}$
and $\mathbf{B} = \mathbf{X}'\mathbf{X}$, then by \citet[Theorem 2.1.14, p. 66]{mh:05}, for
$\mathbf{H}_{1},\mathbf{G}_{1} \in \mathcal{V}_{m,n}$
\begin{eqnarray*}
  (d\mathbf{Y}) &=& 2^{-m}|\mathbf{A}|^{(n-m-1)/2}(d\mathbf{A})(\mathbf{H}'_{1}d\mathbf{H}_{1}) \\
  (d\mathbf{X}) &=& 2^{-m}|\mathbf{B}|^{(n-m-1)/2}(d\mathbf{B})(\mathbf{G}'_{1}d\mathbf{G}_{1}),
\end{eqnarray*}
thus
\begin{eqnarray}
  \label{eq4}(d\mathbf{A}) &=& 2^{m}|\mathbf{A}|^{-(n-m-1)/2}(d\mathbf{Y})(\mathbf{H}'_{1}d\mathbf{H}_{1})^{-1} \\
  \label{eq5}(d\mathbf{B}) &=& 2^{m}|\mathbf{B}|^{-(n-m-1)/2}(d\mathbf{X})(\mathbf{G}'_{1}d\mathbf{G}_{1})^{-1}.
\end{eqnarray}
Also, observe that
\begin{eqnarray*}
  \mathbf{A} &=& (\mathbf{I}_{m}-\mathbf{X}'\mathbf{X})^{-1/2}\mathbf{X}'\mathbf{X}(\mathbf{I}_{m}-\mathbf{X}'
        \mathbf{X})^{-1/2}\\
    &=& (\mathbf{I}_{m}-\mathbf{B})^{-1/2}\mathbf{B}(\mathbf{I}_{m}-\mathbf{B})^{-1/2} \\
    &=& \left[\mathbf{B}^{-1}(\mathbf{I}_{m} - \mathbf{B})\right]^{-1/2} \left[\mathbf{B}^{-1}(\mathbf{I}_{m}
        - \mathbf{B})\right]^{-1/2}\\
    &=& \left(\mathbf{B}^{-1}- \mathbf{I}_{m}\right)^{-1/2} \left(\mathbf{B}^{-1}- \mathbf{I}_{m} \right)^{-1/2}\\
    &=& \left(\mathbf{B}^{-1}- \mathbf{I}_{m}\right)^{-1} = (\mathbf{I}_{m} - \mathbf{B})^{-1}\mathbf{B} =
    (\mathbf{I}_{m} - \mathbf{B})^{-1}-\mathbf{I}_{m}.
\end{eqnarray*}
Hence, from \citet[Theorem 2.1.8, p. 59]{mh:05}, we have that
\begin{equation}\label{eq6}
    (d\mathbf{A}) = |\mathbf{I}_{m} - \mathbf{B}|^{-(m+1)}(d\mathbf{B}).
\end{equation}
Substituting (\ref{eq6}) into (\ref{eq5}) and equating to (\ref{eq4}) we have
$$
  2^{m}|\mathbf{A}|^{-(n-m-1)/2}(d\mathbf{Y})(\mathbf{H}'_{1}d\mathbf{H}_{1})^{-1}
  \hspace{6cm}
$$
$$
  \hspace{4cm} = 2^{m}|\mathbf{I}_{m} -
  \mathbf{B}|^{-(m+1)}|\mathbf{B}|^{-(n-m-1)/2}(d\mathbf{X})(\mathbf{G}'_{1}d\mathbf{G}_{1})^{-1}.
$$
Then, by the uniqueness of the nonnormalised measure on Stiefel manifold,
$(\mathbf{H}'_{1}d\mathbf{H}_{1}) = (\mathbf{G}'_{1}d\mathbf{G}_{1})$. Thus,
$$
  (d\mathbf{Y}) = |\mathbf{A}|^{(n-m-1)/2}|\mathbf{I}_{m} - \mathbf{B}|^{-(m+1)}|\mathbf{B}|^{-(n-m-1)/2}(d\mathbf{X}).
$$
And using $|\mathbf{A}| = |(\mathbf{I}_{m}-\mathbf{X}'\mathbf{X})^{-1/2} \mathbf{X}'\mathbf{X}
(\mathbf{I}_{m}-\mathbf{X}'\mathbf{X})^{-1/2}| = |(\mathbf{I}_{m}-\mathbf{X}'\mathbf{X})|^{-1}
|\mathbf{X}'\mathbf{X}|$ and $|\mathbf{B}| = |\mathbf{X}'\mathbf{X}|$,  the required result is
obtained.

\textbf{ii}). The proof is similar to the preceding exposition given in \textbf{i}).
\end{proof}

Now, $\mathbf{Z}$ or order $N \times m$ has a matrix variate elliptical distribution with respect to the
Lebesgue measure $(d\mathbf{Z})$, if its density function is given by
\begin{equation}\label{elliptical}
 dF_{_{\mathbf{Z}}} (\mathbf{Z}) \propto |\mathbf{\Sigma}|^{-N/2}|\mathbf{\Theta}|^{-m/2}
  h\left(\tr\left((\mathbf{Z}-\boldsymbol{\mu})^{T}\mathbf{\Sigma}^{-1}(\mathbf{Z}-
  \boldsymbol{\mu})\mathbf{\Theta}^{-1}\right)\right)(d\mathbf{Z}).
\end{equation}
The location  parameter is $\boldsymbol{\mu}:N \times m$;  and the scale parameters are given by the
positive definite matrices $\mathbf{\Sigma}:N \times N$ and $\mathbf{\Theta}:m \times m$. The
distribution is denoted by $\mathbf{Z}\sim \mathcal{E}_{N \times m}(\boldsymbol{\mu} ,\mathbf{\Sigma},
\mathbf{\Theta}; h)$, and indexed by the kernel function $h\mbox{: } \Re \to [0,\infty)$, where
$\int_{0}^\infty u^{Nm-1}h(u^2)\text{d}u < \infty$.

A special case of a matrix variate elliptical distribution appears when $\boldsymbol{\mu} =
\mathbf{0}$, $\mathbf{\Sigma} = \mathbf{I}_{N}$ and $\mathbf{\Theta} = \mathbf{I}_{m}$. In that
situation we say that $\mathbf{Z}$ has a matrix variate spherical distribution.

\section{Multimatricvariate distributions}\label{sec:3}

First, start with $\mathbf{X}\sim \mathcal{E}_{N \times m}(\mathbf{0}, \mathbf{I}_{N},
\mathbf{I}_{m}; h)$, such that $n_{1}+ \cdots + n_{k} = N$, and $\mathbf{X} = \left(\mathbf{X}'_{1},
\dots, \mathbf{X}'_{k} \right)'$. Then (\ref{elliptical}) can be written as
$$
  dF_{\mathbf{X}_{1}, \dots,\mathbf{X}_{k}}(\mathbf{X}_{1}, \dots,\mathbf{X}_{k}) =
  h(\tr(\mathbf{X}'_{1}\mathbf{X}_{1}+\cdot+\mathbf{X}'_{k}\mathbf{X}_{k}))\bigwedge_{i=1}^{k}(d\mathbf{X}_{i}),
$$
where $\mathbf{X}_{i} \in \Re^{n_{i} \times m}$, $n_{i} \geq m$, $i = 1, \dots, k$. Note that the random
matrices $\mathbf{X}_{1}, \dots,\mathbf{X}_{k}$ are probabilistically dependent.  Only under a matrix
variate normal distribution, the random matrices are independent.

Now, define $\mathbf{W}_{i} = \mathbf{X}'_{i}\mathbf{X}_{i}$, $i = 1, \dots, k$, therefore from
\citet[Theorem 2.1.14, p. 66]{mh:05}
$$
  (d\mathbf{X}_{i}) = 2^{-m}|\mathbf{W}_{i}|^{(n_{i}-m-1)/2}(d\mathbf{W}_{i})(\mathbf{H}_{i}d\mathbf{H}_{i}),
$$
where $\mathbf{H}_{i} \in \mathcal{V}_{m,n_{i}} = \{\mathbf{H}_{i} \in
\Re^{n_{i},m}|\mathbf{H}'_{i}\mathbf{H}_{i} = \mathbf{I}_{m}\}$, $\mathcal{V}_{m,n_{i}}$ is termed the
Stiefel manifold. Hence
$$
  \bigwedge_{i=1}^{k}(d\mathbf{X}_{i}) = 2^{-mk} \prod_{i=1}^{k}|\mathbf{W}_{i}|^{(n_{i}-m-1)/2}
  \bigwedge_{i=1}^{k}(d\mathbf{W}_{i}) \bigwedge_{i=1}^{k}(\mathbf{H}_{i}d\mathbf{H}_{i}),
$$
and by \citet[Theorem 2.1.15, p. 70]{mh:05}
$$
  \int_{\mathbf{H}_{1}\in \mathcal{V}_{m,n_{1}}} \cdots \int_{\mathbf{H}_{k} \in \mathcal{V}_{m,n_{k}}}
  \bigwedge_{i=1}^{k}(\mathbf{H}_{i}d\mathbf{H}_{i}) = \prod_{i=1}^{k} \int_{\mathbf{H}_{i} \in
  \mathcal{V}_{m,n_{i}}} (\mathbf{H}_{i}d\mathbf{H}_{i}) = \frac{2^{km} \pi^{Nm/2}}{\displaystyle\prod_{i=1}^{k}
  \Gamma_{m}[n_{i}/2]}.
$$
Thus the joint density $dF_{\mathbf{W}_{1}, \dots,\mathbf{W}_{k}}(\mathbf{W}_{1},
\dots,\mathbf{W}_{k})$ is given by
\begin{equation}\label{GW}
     \pi^{Nm/2} \left(\prod_{i=1}^{k} \frac{|\mathbf{W}_{i}|^{(n_{i}-m-1)/2}}{\Gamma_{m}[n_{i}/2]}\right )
    h(\tr(\mathbf{W}_{1}+ \cdots + \mathbf{W}_{k}))\bigwedge_{i=1}^{k}(d\mathbf{W}_{i}).
\end{equation}
This distribution was studied in \citet{fz:90} and it will be called \emph{multimatricvariate generalised
Wishart distribution}. Now, let $\mathbf{\Sigma}_{i} \in \Re^{n_{i} \times n_{i}}$ be positive definite
matrices, with $i = 1,\dots,k$, and define $\mathbf{V}_{i} = \mathbf{\Sigma}_{i}^{1/2} \mathbf{W}_{i}
\mathbf{\Sigma}_{i}^{1/2}$, where $\mathbf{\Sigma}_{i}^{1/2}$ is the positive definite square root of
$\mathbf{\Sigma}_{i}$, i.e., $\mathbf{\Sigma}_{i} = \mathbf{\Sigma}_{i}^{1/2} \mathbf{\Sigma}_{i}^{1/2}$,
see \citet[Theorem A9.3, p. 588]{mh:05}. Hence, from \citet[Theorem 2.1.6, p. 58]{mh:05}
$(d\mathbf{W}_{i}) = |\mathbf{\Sigma}_{i}|^{-(m-1)/2}(d\mathbf{V}_{i})$. And
$$
  \bigwedge_{i=1}^{k}(d\mathbf{W}_{i}) = \prod_{i=1}^{k} |\mathbf{\Sigma}_{i}|^{-(m-1)/2}\bigwedge_{i=1}^{k}
  (d\mathbf{V}_{i}).
$$
Then, by (\ref{GW}), the joint density $dF_{\mathbf{V}_{1}, \dots,\mathbf{V}_{k}}(\mathbf{V}_{1},
\dots,\mathbf{V}_{k})$ is given by
\begin{equation}\label{mGW}
     \pi^{Nm/2} \left(\prod_{i=1}^{k} \frac{|\mathbf{V}_{i}|^{(n_{i}-m-1)/2}}{\Gamma_{m}[n_{i}/2]
    |\mathbf{\Sigma}_{i}|^{n_{i}/2}}\right ) h(\tr(\mathbf{\Sigma}_{1}^{-1}\mathbf{V}_{1}+ \cdots
    + \mathbf{\Sigma}_{k}^{-1}\mathbf{V}_{k})) \bigwedge_{i=1}^{k}(d\mathbf{V}_{i}).
\end{equation}
A simple particular case is the normal bivariate distribution, full derived by \citet{n:07}.

Now, a generalization of the bivariate gama-beta distribution of \citet{n:13} is provided next in the
context of a multimatricvariate distribution under an elliptical model:

\begin{theorem}\label{teo2} Assume that $\mathbf{X} = \left(\mathbf{X}'_{0}, \dots,
\mathbf{X}'_{k} \right)'$ has  a matrix variate spherical distribution, with $\mathbf{X}_{i} \in
\Re^{n_{i} \times m}$, $n_{i} \geq m$, $i = 0,1, \dots, k$. Define $\mathbf{V}_{0} =
\mathbf{X}'_{0}\mathbf{X}_{0}$ and $\mathbf{T}_{i} = \mathbf{X}_{i} \mathbf{V}_{0}^{-1/2}$, $i =
1,\dots,k$.\\ Then, the joint density   $dF_{\mathbf{V}_{0},\mathbf{T}_{1},
\dots,\mathbf{T}_{k}}(\mathbf{V}_{0},\mathbf{T}_{1}, \dots,\mathbf{T}_{k})$ is given by
\begin{equation}\label{mwt}
   \frac{\pi^{n_{0}m/2}}{\Gamma_{m}[n_{0}/2]}|\mathbf{V}_{0}|^{\left(n^{*}-m-1\right)/2}
  h\left[\tr \mathbf{V}_{0}\left(\mathbf{I}_{m}+\displaystyle\sum_{i=1}^{k}\mathbf{T}'_{i}\mathbf{T}_{i}\right)\right]
  (d\mathbf{V}_{0})\bigwedge_{i=1}^{k}\left(d\mathbf{T}_{i}\right),
\end{equation}
where $n^{*} = n_{0}+n_{1}+\cdots+n_{k}$, $\mathbf{V}_{0} > 0$ and $\mathbf{T}_{i} \in \Re^{n_{i} \times
m}$, $i = 1,\dots,k$. This distribution will be termed \emph{multimatricvariate Wishart-T distribution}.
Moreover, the so called  \emph{multimatricvariate T distribution} is the marginal density
$dF_{\mathbf{T}_{1}, \dots,\mathbf{T}_{k}}(\mathbf{T}_{1}, \dots,\mathbf{T}_{k})$ of $\mathbf{T}_{1},
\dots,\mathbf{T}_{k}$ and is given by
\begin{equation}\label{mt}
   \frac{\Gamma_{m}[n^{*}/2]}{\pi^{m(n^{*}-n_{0})/2}\Gamma_{m}[n_{0}/2]}
  \left|\mathbf{I}_{m}+\displaystyle\sum_{i=1}^{k}\mathbf{T}'_{i}\mathbf{T}_{i}\right|^{-n^{*}/2}
  \bigwedge_{i=1}^{k}\left(d\mathbf{T}_{i}\right).
\end{equation}
\end{theorem}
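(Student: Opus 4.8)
The plan is to start from the spherical density of $\mathbf{X}=(\mathbf{X}_0',\dots,\mathbf{X}_k')'$, namely $h\bigl(\tr(\mathbf{X}_0'\mathbf{X}_0+\dots+\mathbf{X}_k'\mathbf{X}_k)\bigr)\bigwedge_{i=0}^k(d\mathbf{X}_i)$, and push it forward through the change of variables $\mathbf{X}_0\mapsto\mathbf{V}_0=\mathbf{X}_0'\mathbf{X}_0$ together with $\mathbf{X}_i\mapsto\mathbf{T}_i=\mathbf{X}_i\mathbf{V}_0^{-1/2}$ for $i=1,\dots,k$. First I would handle the block $\mathbf{X}_0$: by \citet[Theorem 2.1.14, p. 66]{mh:05}, exactly as in the derivation of \eqref{GW}, we get $(d\mathbf{X}_0)=2^{-m}|\mathbf{V}_0|^{(n_0-m-1)/2}(d\mathbf{V}_0)(\mathbf{H}_0'd\mathbf{H}_0)$ with $\mathbf{H}_0\in\mathcal{V}_{m,n_0}$, and integrating the Stiefel factor out contributes $2^m\pi^{n_0m/2}/\Gamma_m[n_0/2]$. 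Next, for each $i\ge 1$, with $\mathbf{V}_0$ (hence $\mathbf{V}_0^{1/2}$) held fixed, the map $\mathbf{X}_i=\mathbf{T}_i\mathbf{V}_0^{1/2}$ is linear in $\mathbf{T}_i$, so by \citet[Theorem 2.1.6, p. 58]{mh:05} its Jacobian is $(d\mathbf{X}_i)=|\mathbf{V}_0|^{n_i/2}(d\mathbf{T}_i)$ (the power $n_i/2$ coming from the $n_i$ rows each transformed by $\mathbf{V}_0^{1/2}$, whose determinant is $|\mathbf{V}_0|^{1/2}$). Collecting the determinant powers of $|\mathbf{V}_0|$ gives the exponent $(n_0-m-1)/2+\sum_{i=1}^k n_i/2=(n^*-m-1)/2$, which matches \eqref{mwt}.

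For the argument of $h$, I would substitute $\mathbf{X}_0'\mathbf{X}_0=\mathbf{V}_0$ and $\mathbf{X}_i'\mathbf{X}_i=\mathbf{V}_0^{1/2}\mathbf{T}_i'\mathbf{T}_i\mathbf{V}_0^{1/2}$, so that $\tr(\sum_{i=0}^k\mathbf{X}_i'\mathbf{X}_i)=\tr\mathbf{V}_0+\sum_{i=1}^k\tr\mathbf{V}_0^{1/2}\mathbf{T}_i'\mathbf{T}_i\mathbf{V}_0^{1/2}=\tr\mathbf{V}_0\bigl(\mathbf{I}_m+\sum_{i=1}^k\mathbf{T}_i'\mathbf{T}_i\bigr)$ by cyclicity of the trace. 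Assembling the Jacobian factors, the constant, and this trace identity yields \eqref{mwt}.

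For the marginal \eqref{mt} I would integrate \eqref{mwt} over $\mathbf{V}_0>0$. Writing $\mathbf{S}=\mathbf{I}_m+\sum_{i=1}^k\mathbf{T}_i'\mathbf{T}_i>0$, the inner integral is $\int_{\mathbf{V}_0>0}|\mathbf{V}_0|^{(n^*-m-1)/2}h(\tr\mathbf{V}_0\mathbf{S})(d\mathbf{V}_0)$. Substituting $\mathbf{V}_0=\mathbf{S}^{-1/2}\mathbf{U}\mathbf{S}^{-1/2}$ (Jacobian $|\mathbf{S}|^{-(m+1)/2}$ by \citet[Theorem 2.1.6, p. 58]{mh:05}) turns this into $|\mathbf{S}|^{-n^*/2}\int_{\mathbf{U}>0}|\mathbf{U}|^{(n^*-m-1)/2}h(\tr\mathbf{U})(d\mathbf{U})$, and the remaining integral is a standard normalizing constant: comparing with the total mass of \eqref{GW} (equivalently, using the known value $\int_{\mathbf{U}>0}|\mathbf{U}|^{(n^*-m-1)/2}h(\tr\mathbf{U})(d\mathbf{U})=\Gamma_m[n^*/2]/\pi^{?}$ type identity that the kernel $h$ must satisfy for a spherical law in dimension $n^*\times m$) it evaluates to $\Gamma_m[n^*/2]/(\pi^{n^*m/2})\cdot\pi^{n^*m/2}$-normalization, so that after multiplying by the prefactor $\pi^{n_0m/2}/\Gamma_m[n_0/2]$ one obtains $\dfrac{\Gamma_m[n^*/2]}{\pi^{m(n^*-n_0)/2}\Gamma_m[n_0/2]}|\mathbf{S}|^{-n^*/2}$, which is \eqref{mt}.

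The main obstacle I expect is bookkeeping the $|\mathbf{V}_0|$-powers correctly through the two-stage transformation — in particular getting the factor $|\mathbf{V}_0|^{n_i/2}$ (not $|\mathbf{V}_0|^{n_i m/2}$ or some half-integer slip) from each linear map $\mathbf{T}_i\mapsto\mathbf{T}_i\mathbf{V}_0^{1/2}$, and then reconciling the accumulated constant with the implicit normalization of the spherical kernel $h$ so that the $\pi$-powers in \eqref{mt} come out as $\pi^{m(n^*-n_0)/2}$. The trace manipulation and the final $\mathbf{S}^{-1/2}$-substitution are routine once the elliptical normalization is pinned down by comparison with the total mass of the density \eqref{GW}.
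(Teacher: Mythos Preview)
Your proposal is correct and follows essentially the same route as the paper: decompose $(d\mathbf{X}_0)$ via \citet[Theorem~2.1.14]{mh:05} and integrate out the Stiefel factor, apply the linear Jacobian $(d\mathbf{X}_i)=|\mathbf{V}_0|^{n_i/2}(d\mathbf{T}_i)$ for $i\ge1$, and then obtain \eqref{mt} by integrating out $\mathbf{V}_0$ (the paper simply cites \citet[Corollary~5.1.5.1]{gv:93} for this step, whereas you spell out the $\mathbf{S}^{-1/2}$-substitution and recover the constant from the normalization of the spherical law, which amounts to the same thing). One small slip: the Jacobian for the right multiplication $\mathbf{X}_i=\mathbf{T}_i\mathbf{V}_0^{1/2}$ is \citet[Theorem~2.1.4]{mh:05}, not Theorem~2.1.6 (the latter is for congruence transformations of symmetric matrices, which is what you correctly use later for $\mathbf{V}_0=\mathbf{S}^{-1/2}\mathbf{U}\mathbf{S}^{-1/2}$).
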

\begin{proof}
The join density of $\mathbf{X}_{0}, \dots, \mathbf{X}_{k}$ is
$$
  dF_{\mathbf{X}_{0}, \dots,\mathbf{X}_{k}}(\mathbf{X}_{1}, \dots,\mathbf{X}_{k}) =
  h(\tr(\mathbf{X}'_{0}\mathbf{X}_{0}+\cdot+\mathbf{X}'_{k}\mathbf{X}_{k}))\bigwedge_{i=0}^{k}(d\mathbf{X}_{i}).
$$
Then, by setting $\mathbf{V}_{0} = \mathbf{X}'_{0}\mathbf{X}_{0}$ and $\mathbf{T}_{i} =
\mathbf{X}_{i}\mathbf{V}_{0}^{-1/2}$, $i = 1,\dots,k$, we have by \citet[theorems 2.1.4 and
2.1.14]{mh:05}.
$$
  \bigwedge_{i=0}^{k}(d\mathbf{X}_{i}) = 2^{-m} |\mathbf{V}_{0}|^{(n_{0}-m-1)/2}(d\mathbf{V}_{0})\wedge
  (\mathbf{H}'_{1}d\mathbf{H}_{1}) \cdot \left (\prod_{i=1}^{k} |\mathbf{V}_{0}|^{n_{i}/2}\right)
  \bigwedge_{i=1}^{k}(d\mathbf{T}_{i}),
$$
where $\mathbf{H}_{1} \in \mathcal{V}_{m,n_{0}}$. Thus the joint density $dF_{\mathbf{V}_{0},
\mathbf{H}_{1}, \mathbf{T}_{1}, \dots,\mathbf{T}_{k}}(\mathbf{V}_{0},\mathbf{H}_{1},\mathbf{T}_{1},
\dots,\mathbf{T}_{k})$ is
$$
  |\mathbf{V}_{0}|^{\left(n^{*}-m-1\right)/2}
  h\left(\tr \mathbf{V}_{0}\left(\mathbf{I}_{m}+\displaystyle\sum_{i=1}^{k}\mathbf{T}'_{i}\mathbf{T}_{i}\right)\right)
  (d\mathbf{V}_{0})\bigwedge_{i=1}^{k}\left(d\mathbf{T}_{i}\right)\wedge
  (\mathbf{H}'_{1}d\mathbf{H}_{1}).
$$
where $n^{*} = n_{0}+ n_{1}+\cdots+n_{k}$.

The integration over the Stiefel manifold, by using \citet[Theorem 2.1.15]{mh:05}, provides the required
marginal density (\ref{mwt}) of $\mathbf{V}_{0},\mathbf{T}_{1}, \dots,\mathbf{T}_{k}$.

Now, the integration of (\ref{mwt}) with respect to $\mathbf{V}_{0}$, by using \citet[Corollary 5.1.5.1,
p.169]{gv:93} or \citet[Corollary 2, p. 34]{cdg:10}, gives the marginal density of $\mathbf{T}_{1},
\dots,\mathbf{T}_{k}$. Then, the marginal density  is invariant under the elliptical family  and is given
by (\ref{mt}).
\end{proof}

Now, taking $\mathbf{F}_{i} = \mathbf{T}'_{i}\mathbf{T}_{i}$, $i = 1,
\dots,k$ in Theorem \ref{teo2}, we obtain the following multimatricvariate distributions.

\begin{theorem}\label{teo3}
Under the  hypotheses of Theorem \ref{teo2}, define $\mathbf{F}_{i} = \mathbf{T}'_{i}\mathbf{T}_{i}$, $i
= 1, \dots,k$. Then the joint density $dF_{\mathbf{V}_{0}, \mathbf{F}_{1},
\dots,\mathbf{F}_{k}}(\mathbf{V}_{0},\mathbf{F}_{1}, \dots,\mathbf{F}_{k})$ is
$$
   \frac{\pi^{m n^{*}/2}}{\displaystyle\prod_{i=0}^{k}\Gamma_{m}[n_{i}/2]}
  |\mathbf{V}_{0}|^{\left(n^{*}-m-1\right)/2} \prod_{i=1}^{k}|\mathbf{F}_{i}|^{(n_{i}-m-1)/2}
  h\left(\tr \mathbf{V}_{0}\left(\mathbf{I}_{m}+\displaystyle\sum_{i=1}^{k}\mathbf{F}_{i}\right)\right)
$$
$$\hspace{10cm}
  (d\mathbf{V}_{0})\bigwedge_{i=1}^{k}\left(d\mathbf{F}_{i}\right).
$$
This distribution will be termed \emph{multimatricvariate Wishart-beta type II distribution}.  The
associated marginal  density function
 $dF_{\mathbf{F}_{1}, \dots, \mathbf{F}_{k}}(\mathbf{F}_{1}, \dots,\mathbf{F}_{k})$
is given by
\begin{equation}\label{MMVF}
     \frac{\Gamma_{m}[n^{*}/2]}{\displaystyle\prod_{i=0}^{k}\Gamma_{m}[n_{i}/2]}
  \prod_{i=1}^{k}|\mathbf{F}_{i}|^{(n_{i}-m-1)/2}
  \left|\mathbf{I}_{m}+\displaystyle\sum_{i=1}^{k}\mathbf{F}_{i}\right|^{-n^{*}/2}
  \bigwedge_{i=1}^{k}\left(d\mathbf{F}_{i}\right).
\end{equation}
This distribution will be called \emph{multimatricvariate beta type II distribution}.
\end{theorem}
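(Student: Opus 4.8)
The plan is to transport the two densities obtained in Theorem~\ref{teo2} through the change of variables $\mathbf{T}_{i}\mapsto\mathbf{F}_{i}=\mathbf{T}'_{i}\mathbf{T}_{i}$, $i=1,\dots,k$, exactly as in the proof of Theorem~\ref{teo2} but now applied to the $\mathbf{T}_{i}$'s, using the polar-type decomposition of \citet[Theorem 2.1.14, p. 66]{mh:05} together with the Stiefel volume of \citet[Theorem 2.1.15, p. 70]{mh:05}. Since $n_{i}\geq m$ and each $\mathbf{T}_{i}$ carries a density, $\mathbf{F}_{i}=\mathbf{T}'_{i}\mathbf{T}_{i}$ is a.s.\ positive definite, so one may write $\mathbf{T}_{i}=\mathbf{H}_{i}\mathbf{F}_{i}^{1/2}$ with $\mathbf{H}_{i}\in\mathcal{V}_{m,n_{i}}$, whence $(d\mathbf{T}_{i})=2^{-m}|\mathbf{F}_{i}|^{(n_{i}-m-1)/2}(d\mathbf{F}_{i})(\mathbf{H}'_{i}d\mathbf{H}_{i})$ and therefore
$$
  \bigwedge_{i=1}^{k}(d\mathbf{T}_{i})=2^{-mk}\prod_{i=1}^{k}|\mathbf{F}_{i}|^{(n_{i}-m-1)/2}
  \bigwedge_{i=1}^{k}(d\mathbf{F}_{i})\bigwedge_{i=1}^{k}(\mathbf{H}'_{i}d\mathbf{H}_{i}).
$$

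Substituting this into (\ref{mwt}) and using $\sum_{i=1}^{k}\mathbf{T}'_{i}\mathbf{T}_{i}=\sum_{i=1}^{k}\mathbf{F}_{i}$, the integrand loses all dependence on the $\mathbf{H}_{i}$, so integrating each $\mathbf{H}_{i}$ over $\mathcal{V}_{m,n_{i}}$ multiplies by $\int_{\mathcal{V}_{m,n_{i}}}(\mathbf{H}'_{i}d\mathbf{H}_{i})=2^{m}\pi^{mn_{i}/2}/\Gamma_{m}[n_{i}/2]$. The factor $2^{-mk}$ cancels $\prod_{i=1}^{k}2^{m}$, the numerator accumulates $\pi^{n_{0}m/2}\prod_{i=1}^{k}\pi^{mn_{i}/2}=\pi^{mn^{*}/2}$, and the denominator becomes $\Gamma_{m}[n_{0}/2]\prod_{i=1}^{k}\Gamma_{m}[n_{i}/2]=\prod_{i=0}^{k}\Gamma_{m}[n_{i}/2]$; this is precisely the claimed joint density of $\mathbf{V}_{0},\mathbf{F}_{1},\dots,\mathbf{F}_{k}$.

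For the marginal (\ref{MMVF}) the quickest route is to apply the identical change of variables to the marginal density (\ref{mt}) of $\mathbf{T}_{1},\dots,\mathbf{T}_{k}$ already supplied by Theorem~\ref{teo2}: the Stiefel volumes now contribute $\prod_{i=1}^{k}\pi^{mn_{i}/2}=\pi^{m(n^{*}-n_{0})/2}$, which cancels exactly the $\pi^{m(n^{*}-n_{0})/2}$ in the denominator of (\ref{mt}), and the $\Gamma_{m}$ factors combine as above, leaving (\ref{MMVF}). Equivalently one may integrate $\mathbf{V}_{0}$ out of the joint density just established: the inner integral $\int_{\mathbf{V}_{0}>0}|\mathbf{V}_{0}|^{(n^{*}-m-1)/2}h(\tr\mathbf{V}_{0}(\mathbf{I}_{m}+\sum_{i=1}^{k}\mathbf{F}_{i}))(d\mathbf{V}_{0})$ is evaluated by \citet[Corollary 5.1.5.1, p. 169]{gv:93} (or \citet[Corollary 2, p. 34]{cdg:10}) as a constant times $|\mathbf{I}_{m}+\sum_{i=1}^{k}\mathbf{F}_{i}|^{-n^{*}/2}$, the constant being fixed by normalization; as in Theorem~\ref{teo2} the kernel $h$ disappears, so the beta type~II marginal is invariant over the elliptical family. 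The whole argument is essentially bookkeeping; the only points requiring care are keeping the powers of $2$ and $\pi$ and the $\Gamma_{m}$ factors aligned so that the advertised constants emerge, and the a.s.\ positive definiteness of $\mathbf{F}_{i}$ that makes the decomposition $\mathbf{T}_{i}=\mathbf{H}_{i}\mathbf{F}_{i}^{1/2}$ legitimate — both guaranteed by $n_{i}\geq m$.
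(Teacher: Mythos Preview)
Your argument is correct and matches the paper's own proof: apply \citet[Theorem 2.1.14]{mh:05} to write $\bigwedge_{i=1}^{k}(d\mathbf{T}_{i})=2^{-mk}\prod_{i=1}^{k}|\mathbf{F}_{i}|^{(n_{i}-m-1)/2}\bigwedge_{i=1}^{k}(d\mathbf{F}_{i})\bigwedge_{i=1}^{k}(\mathbf{H}_{i}'d\mathbf{H}_{i})$, then integrate the $\mathbf{H}_{i}$ over $\mathcal{V}_{m,n_{i}}$ via \citet[Theorem 2.1.15]{mh:05} and substitute into (\ref{mwt}) and (\ref{mt}). Your additional bookkeeping of the constants and the remark on the alternative derivation of (\ref{MMVF}) by integrating out $\mathbf{V}_{0}$ are fine elaborations but not departures from the paper's route.
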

\begin{proof}
Defining $\mathbf{F}_{i} = \mathbf{T}'_{i}\mathbf{T}_{i}$, $i = 1, \dots,n$, and applying \citet[Theorem
2.1.14]{mh:05}, we obtain
$$
  \bigwedge_{i=1}^{k}(d\mathbf{T}_{i}) = 2^{-mk} \prod_{i=1}^{k}|\mathbf{F}_{i}|^{(n_{i}-m-1)/2}
  \bigwedge_{i=1}^{k}(d\mathbf{F}_{i}) \bigwedge_{i=1}^{k}(\mathbf{H}_{i}d\mathbf{H}_{i}),
$$
where $\mathbf{H}_{i} \in \mathcal{V}_{m,n_{i}}$, $i = 1, \dots,k$. Then, integrating iteratively with
respect to $\mathbf{H}_{i} \in \mathcal{V}_{m,n_{i}}$, the results are obtained form (\ref{mwt}) and
(\ref{mt}), respectively, see \citet[Theorem 2.1.15, p. 70]{mh:05}.
\end{proof}

Now, consider the  Gaussian kernel $h(y)=(2\pi)^{-n^{*}m/2}e^{-y/2}$ in Theorem \ref{teo3}, then  a
number of published results for bivariate and bimatrix cases are obtained. But they were originally full
derived under a context of independence, see \citet{dggj:10a}, \citet{e:11} and \citet{n:13}. Using a
different approach, the normal case of (\ref{MMVF}) was derived by \citet[Theorem 3.1]{or:64}.

In a similar way, we can obtain the so called multimatricvariate Wishart-Pearson type II, Pearson type
II, Wishart-beta type I and beta type I distributions. The results are based on the theorems \ref{teo2}
and \ref{teo3} using the Jacobians of Theorem \ref{teo1}. The following result summarizes our approach.

\begin{theorem}\label{teo4}
Suppose that $\mathbf{X} = \left(\mathbf{X}'_{0}, \dots, \mathbf{X}'_{k} \right)'$ has a matrix variate
spherical distribution, with $\mathbf{X}_{i} \in \Re^{n_{i} \times m}$, $n_{i} \geq m$, $i = 0,1, \dots,
k$. Define $\mathbf{W}_{0} = \mathbf{X}'_{0}\mathbf{X}_{0}$ and $\mathbf{R}_{i} =
\mathbf{X}_{i}(\mathbf{W}_{0}+\mathbf{X}_{i}'\mathbf{X}_{i})^{-1/2}$, $i = 1,\dots,k$. Then the joint
density of $\mathbf{W}_{0},\mathbf{R}_{1}, \dots,\mathbf{R}_{k}$, denoted as
$dF_{\mathbf{W}_{0},\mathbf{R}_{1}, \dots,\mathbf{R}_{k}}(\mathbf{W}_{0},\mathbf{R}_{1},
\dots,\mathbf{R}_{k})$, is given by
\begin{equation}\label{mwP2}
\frac{\pi^{n_{0}m/2}}{\Gamma_{m}[n_{0}/2]}|\mathbf{W}_{0}|^{\left(n^{*}-m-1\right)/2} h\left(\tr
\mathbf{W}_{0}\left(\mathbf{I}_{m}+\displaystyle\sum_{i=1}^{k}(\mathbf{I}_{m}-
\mathbf{R}'_{i}\mathbf{R}_{i})^{-1}\mathbf{R}'_{i}\mathbf{R}_{i}\right)\right)
$$
$$
  \times  \prod_{i=i}^{k}|\mathbf{I}_{m}-
  \mathbf{R}'_{i}\mathbf{R}_{i}|^{-(n_{i}+m+1)/2}
  (d\mathbf{W}_{0})\bigwedge_{i=1}^{k}\left(d\mathbf{R}_{i}\right),
\end{equation}
where $n^{*} = n_{0}+n_{1}+\cdots+n_{k}$, $\mathbf{W}_{0} > 0$ and $\mathbf{R}_{i} \in \Re^{n_{i} \times
m}$, $i = 1,\dots,k$. This distribution will be termed \emph{multimatricvariate Wishart-Pearson type II
distribution}. Moreover,  the marginal density $dF_{\mathbf{R}_{1}, \dots,\mathbf{R}_{k}}(\mathbf{R}_{1},
\dots,\mathbf{R}_{k})$ is
\begin{equation}\label{mP2}
   \frac{\Gamma_{m}[n^{*}/2]}{\pi^{m (n^{*}-n_{0})/2}\Gamma_{m}[n_{0}/2]}
  \left|\prod_{i=1}^{k}(\mathbf{I}_{m}-\mathbf{R}'_{i}\mathbf{R}_{i})+\displaystyle\sum_{i=1}^{k}\prod_{\build{}{j=1}{j \neq i}}^{k}
  (\mathbf{I}_{m}-\mathbf{R}'_{j}\mathbf{R}_{j})\mathbf{R}'_{i}\mathbf{R}_{i}\right|^{-n^{*}/2}
$$
$$
  \times \prod_{i=1}^{k}\left|\mathbf{I}_{m}-\mathbf{R}'_{i}\mathbf{R}_{i}\right|^{\left(\sum_{\build{}{j=1}{j\neq i}}^{k}n_{j}-m-1\right)/2}
  \bigwedge_{i=1}^{k}\left(d\mathbf{R}_{i}\right),
\end{equation}
which will  be called \emph{multimatricvariate Pearson type II distribution}.
\end{theorem}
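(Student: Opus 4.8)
The plan is to derive (\ref{mwP2}) by transporting the multimatricvariate Wishart--T density (\ref{mwt}) of Theorem~\ref{teo2} through the change of variables $\mathbf{T}_{i}\mapsto\mathbf{R}_{i}$, and then to integrate out $\mathbf{W}_{0}$ exactly as in the passage from (\ref{mwt}) to (\ref{mt}). First I would observe that, since $\mathbf{T}_{i}=\mathbf{X}_{i}\mathbf{V}_{0}^{-1/2}$ and $\mathbf{V}_{0}=\mathbf{W}_{0}$, one has $\mathbf{X}_{i}=\mathbf{T}_{i}\mathbf{W}_{0}^{1/2}$ and $\mathbf{W}_{0}+\mathbf{X}_{i}'\mathbf{X}_{i}=\mathbf{W}_{0}^{1/2}(\mathbf{I}_{m}+\mathbf{T}_{i}'\mathbf{T}_{i})\mathbf{W}_{0}^{1/2}$; hence $\mathbf{R}_{i}=\mathbf{X}_{i}(\mathbf{W}_{0}+\mathbf{X}_{i}'\mathbf{X}_{i})^{-1/2}$ is the image of $\mathbf{T}_{i}$ under the map of part (ii) of Theorem~\ref{teo1}, namely $\mathbf{R}_{i}=\mathbf{T}_{i}(\mathbf{I}_{m}+\mathbf{T}_{i}'\mathbf{T}_{i})^{-1/2}$, whose inverse is $\mathbf{T}_{i}=\mathbf{R}_{i}(\mathbf{I}_{m}-\mathbf{R}_{i}'\mathbf{R}_{i})^{-1/2}$, the map of part (i).

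Next I would perform the substitution termwise in (\ref{mwt}). By part (i) of Theorem~\ref{teo1}, $\bigwedge_{i=1}^{k}(d\mathbf{T}_{i})=\prod_{i=1}^{k}\left|\mathbf{I}_{m}-\mathbf{R}_{i}'\mathbf{R}_{i}\right|^{-(n_{i}+m+1)/2}\bigwedge_{i=1}^{k}(d\mathbf{R}_{i})$, which is the Jacobian product in (\ref{mwP2}). For the kernel argument, the algebraic identity proved inside Theorem~\ref{teo1} — that $(\mathbf{I}_{m}-\mathbf{B})^{-1/2}\mathbf{B}(\mathbf{I}_{m}-\mathbf{B})^{-1/2}=(\mathbf{I}_{m}-\mathbf{B})^{-1}-\mathbf{I}_{m}$, taken with $\mathbf{B}=\mathbf{R}_{i}'\mathbf{R}_{i}$ — gives $\mathbf{T}_{i}'\mathbf{T}_{i}=(\mathbf{I}_{m}-\mathbf{R}_{i}'\mathbf{R}_{i})^{-1}\mathbf{R}_{i}'\mathbf{R}_{i}$, so $\mathbf{I}_{m}+\sum_{i}\mathbf{T}_{i}'\mathbf{T}_{i}=\mathbf{I}_{m}+\sum_{i}(\mathbf{I}_{m}-\mathbf{R}_{i}'\mathbf{R}_{i})^{-1}\mathbf{R}_{i}'\mathbf{R}_{i}$; with $\mathbf{V}_{0}=\mathbf{W}_{0}$ this is exactly the matrix multiplying $\mathbf{W}_{0}$ inside $h(\cdot)$ in (\ref{mwP2}). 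Leaving $|\mathbf{V}_{0}|^{(n^{*}-m-1)/2}$ and the constant $\pi^{n_{0}m/2}/\Gamma_{m}[n_{0}/2]$ untouched then produces (\ref{mwP2}). (One could reach the same display directly from the spherical density of $(\mathbf{X}_{0},\dots,\mathbf{X}_{k})$ by reducing $\mathbf{X}_{0}$ to $(\mathbf{W}_{0},\mathbf{H}_{1})$ with $\mathbf{H}_{1}\in\mathcal{V}_{m,n_{0}}$, writing $\mathbf{X}_{i}=\mathbf{T}_{i}\mathbf{W}_{0}^{1/2}$ so that $(d\mathbf{X}_{i})=|\mathbf{W}_{0}|^{n_{i}/2}(d\mathbf{T}_{i})$, applying Theorem~\ref{teo1} to each $\mathbf{T}_{i}$, and integrating $\mathbf{H}_{1}$ out of $\mathcal{V}_{m,n_{0}}$; the powers of $|\mathbf{W}_{0}|$ combine as $(n_{0}-m-1)/2+\sum_{i=1}^{k}n_{i}/2=(n^{*}-m-1)/2$.)

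For the marginal (\ref{mP2}) I would integrate (\ref{mwP2}) over $\mathbf{W}_{0}>0$ with the elliptical matrix integral already used in Theorem~\ref{teo2} (\citet[Corollary 5.1.5.1]{gv:93}, or \citet[Corollary 2]{cdg:10}): setting $\mathbf{C}=\mathbf{I}_{m}+\sum_{i=1}^{k}(\mathbf{I}_{m}-\mathbf{R}_{i}'\mathbf{R}_{i})^{-1}\mathbf{R}_{i}'\mathbf{R}_{i}$ one has $\int_{\mathbf{W}_{0}>0}|\mathbf{W}_{0}|^{(n^{*}-m-1)/2}h(\tr\mathbf{W}_{0}\mathbf{C})(d\mathbf{W}_{0})=c(h)\,\Gamma_{m}[n^{*}/2]\,|\mathbf{C}|^{-n^{*}/2}$ with $c(h)$ free of the $\mathbf{R}_{i}$, so after absorbing $c(h)$ into the normalising constant the result is invariant over the elliptical family. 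It then remains to rewrite $|\mathbf{C}|^{-n^{*}/2}\prod_{i}\left|\mathbf{I}_{m}-\mathbf{R}_{i}'\mathbf{R}_{i}\right|^{-(n_{i}+m+1)/2}$ in the stated closed form: putting $\mathbf{D}_{i}=\mathbf{I}_{m}-\mathbf{R}_{i}'\mathbf{R}_{i}$ and moving $\prod_{i}\mathbf{D}_{i}$ inside the determinant, $|\mathbf{C}|\prod_{i}|\mathbf{D}_{i}|=\big|\mathbf{C}\prod_{i}\mathbf{D}_{i}\big|=\big|\prod_{i}\mathbf{D}_{i}+\sum_{i}\big(\prod_{j\ne i}\mathbf{D}_{j}\big)\mathbf{R}_{i}'\mathbf{R}_{i}\big|$, whence the $-n^{*}/2$ power of $\mathbf{C}$ turns into the $-n^{*}/2$ power of the matrix displayed in (\ref{mP2}) and the residual powers of $|\mathbf{D}_{i}|$ collect to the exponent $(n^{*}-n_{i}-m-1)/2$.

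The step I expect to fight hardest is the identification in the first paragraph: that $\mathbf{R}_{i}=\mathbf{X}_{i}(\mathbf{W}_{0}+\mathbf{X}_{i}'\mathbf{X}_{i})^{-1/2}$ really is the transform of $\mathbf{T}_{i}$ of part (ii) of Theorem~\ref{teo1}. Since $\mathbf{W}_{0}$ need not commute with $\mathbf{X}_{i}'\mathbf{X}_{i}$, rewriting $\mathbf{W}_{0}^{-1/2}(\mathbf{I}_{m}+\mathbf{W}_{0}^{-1/2}\mathbf{X}_{i}'\mathbf{X}_{i}\mathbf{W}_{0}^{-1/2})^{-1/2}$ as $(\mathbf{W}_{0}+\mathbf{X}_{i}'\mathbf{X}_{i})^{-1/2}$ must be handled with care about which symmetric square roots are meant; what rescues the computation is that (\ref{mwP2}) and (\ref{mP2}) depend on $\mathbf{R}_{i}$ only through $\mathbf{R}_{i}'\mathbf{R}_{i}$, which is unchanged under $\mathbf{R}_{i}\mapsto\mathbf{O}\mathbf{R}_{i}$ for orthogonal $\mathbf{O}$, so only the class of $\mathbf{R}_{i}$ modulo this left action enters. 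A minor secondary obstacle is the determinant regrouping in the last display: it is transparent when $m=1$, where all $\mathbf{D}_{i}$ commute and $\prod_{j\ne i}\mathbf{D}_{j}$ is unambiguous, and in general one fixes an ordering of the factors and checks the identity directly.
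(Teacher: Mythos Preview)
Your proposal is correct and follows essentially the same route as the paper: transport the Wishart--T density (\ref{mwt}) through $\mathbf{T}_{i}=\mathbf{R}_{i}(\mathbf{I}_{m}-\mathbf{R}_{i}'\mathbf{R}_{i})^{-1/2}$ using the Jacobian of Theorem~\ref{teo1}, then integrate out $\mathbf{W}_{0}$ via the same elliptical integral cited there. You are in fact more careful than the paper on two points it leaves implicit---the noncommutativity issue in identifying $\mathbf{R}_{i}$ with $\mathbf{T}_{i}(\mathbf{I}_{m}+\mathbf{T}_{i}'\mathbf{T}_{i})^{-1/2}$, and the ordering ambiguity in the determinant regrouping---both of which the paper simply asserts.
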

\begin{proof}
First, if $\mathbf{T}$ has a matricvariate $T$ distribution, then
$\mathbf{R}=\mathbf{T}(\mathbf{I}_{m}+\mathbf{T}'\mathbf{T})^{-1/2}$ has a matricvariate Pearson type II
distribution, see  \citet{di:67}. Now, using  Theorem \ref{teo1}, we have
$$
  \bigwedge_{i=1}^{k}(d\mathbf{T}) = \prod_{i=1}^{k}\left|\mathbf{I}_{m} - \mathbf{R}'_{i}
  \mathbf{R}_{i}\right|^{-(n_{i}+m+1)/2} \bigwedge_{i=1}^{k}(d\mathbf{R}_{i}).
$$
Then, (\ref{mwP2}) follows is a consequence of (\ref{mwt}),
$\mathbf{T}=\mathbf{R}(\mathbf{I}_{m}-\mathbf{R}'\mathbf{R})^{-1/2}$ and
$$
  \left|\mathbf{I}_{m}+\displaystyle\sum_{i=1}^{k}\mathbf{T}'_{i}\mathbf{T}_{i}\right| =
  \frac{\left|\displaystyle \prod_{i=1}^{k}(\mathbf{I}_{m}-\mathbf{R}'_{i}\mathbf{R}_{i})+\sum_{i=1}^{k}
  \prod_{\build{}{j=1}{j \neq i}}^{k} (\mathbf{I}_{m}-\mathbf{R}'_{j}\mathbf{R}_{j})\mathbf{R}'_{i}
  \mathbf{R}_{i}\right|}{\displaystyle\prod_{i=1}^{k}\left|\mathbf{I}_{m}-\mathbf{R}'_{i}\mathbf{R}_{i}\right|}.
$$
Finally, by \citet[Corollary 5.1.5.1, p. 169]{gv:93} or \citet[Corollary 2, p. 34]{cdg:10} for the
marginal density of $\mathbf{R}_{1}, \dots,\mathbf{R}_{k}$ we have the required result (\ref{mP2}).
\end{proof}

The approach of Theorem \ref{teo3} can be applied under the transformation $\mathbf{U}_{i} =
\mathbf{R}'_{i}\mathbf{R}_{i}$, $i = 1, \dots,k$. First,  by \citet[Theorem 2.1.14, p. 66]{mh:05}, we
have
$$
  \bigwedge_{i=1}^{k}(d\mathbf{R}_{i}) = 2^{-mk} \prod_{i=1}^{k}|\mathbf{U}_{i}|^{(n_{i}-m-1)/2}
  \bigwedge_{i=1}^{k}(d\mathbf{U}_{i}) \bigwedge_{i=1}^{k}(\mathbf{P}_{i}d\mathbf{P}_{i}),
$$
where $\mathbf{P}_{i} \in \mathcal{V}_{m,n_{i}}$, $i = 1, \dots,n$. Then,  multiple integration with
respect to $\mathbf{P}_{i} \in \mathcal{V}_{m,n_{i}}$, and expressions (\ref{mwP2}) and (\ref{mP2})
provide the following multimatricvariate distributions.

\begin{theorem}\label{teo5}
Consider $\mathbf{U}_{i} = \mathbf{R}'_{i}\mathbf{R}_{i}$ with $i = 1, \dots,k$, in Theorem \ref{teo4}
then the joint density $dF_{\mathbf{W}_{0}, \mathbf{U}_{1},\dots,\mathbf{U}_{k}}(\mathbf{W}_{0},
\mathbf{U}_{1}, \dots,\mathbf{U}_{k})$ is
$$
   \frac{\pi^{m n^{*}/2}}{\displaystyle\prod_{i=0}^{k}\Gamma_{m}[n_{i}/2]}
  |\mathbf{W}_{0}|^{\left(n^{*}-m-1\right)/2} \prod_{i=1}^{k}\left(\frac{|\mathbf{U}_{i}|}{|\mathbf{I}_{m} -
  \mathbf{U}_{i}|}\right)^{(n_{i}-m-1)/2} \hspace{2cm}
$$
$$\hspace{2cm}
  \times h\left(\tr \mathbf{W}_{0}\left(\mathbf{I}_{m}+\displaystyle\sum_{i=1}^{k}(\mathbf{I}_{m} -
  \mathbf{U}_{i})^{-1}\mathbf{U}_{i}\right)\right)
  (d\mathbf{W}_{0})\bigwedge_{i=1}^{k}\left(d\mathbf{U}_{i}\right).
$$
This distribution will be termed \emph{multimatricvariate Wishart-beta type I distribution}.  Moreover,
the density function $dF_{\mathbf{U}_{1}, \dots, \mathbf{U}_{k}}(\mathbf{U}_{1}, \dots,\mathbf{U}_{k})$
is  written as
\begin{equation}\label{MMB1}
   \frac{\Gamma_{m}[n^{*}/2]}{\displaystyle\prod_{i=0}^{k}\Gamma_{m}[n_{i}/2]} \left|\displaystyle\prod_{i=1}^{k}
  (\mathbf{I}_{m} - \mathbf{U}_{i}) + \sum_{i=1}^{k}\prod_{\build{}{j=1}{j\neq i}}^{k}(\mathbf{I}_{m} -
  \mathbf{U}_{j})\mathbf{U}_{i}\right|^{-n^{*}/2} \hspace{3cm}
$$
$$ \hspace{2cm}
  \prod_{i=1}^{k} \left(|\mathbf{U}_{i}|^{(n_{i}-m-1)/2}||\mathbf{I}_{m} - \mathbf{U}_{i}|^{\left(
  \sum_{\build{}{j=1}{j\neq i}}^{k}n_{j}-m-1\right)/2}\right) \bigwedge_{i=1}^{k}\left(d\mathbf{U}_{i}\right).
\end{equation}
This marginal distribution will be called \emph{multimatricvariate beta type I distribution}.
\end{theorem}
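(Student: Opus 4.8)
\section*{Proof proposal for Theorem \ref{teo5}}

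The plan is to deduce Theorem \ref{teo5} from Theorem \ref{teo4} by a single matrix-to-matrix change of variables followed by integration over Stiefel manifolds, exactly as announced in the paragraph preceding the statement. I would start from the joint density (\ref{mwP2}) of $\mathbf{W}_{0},\mathbf{R}_{1},\dots,\mathbf{R}_{k}$ and, for each $i=1,\dots,k$, apply \citet[Theorem 2.1.14, p. 66]{mh:05} to the map $\mathbf{R}_{i}\mapsto \mathbf{U}_{i}=\mathbf{R}'_{i}\mathbf{R}_{i}$:
$$
  (d\mathbf{R}_{i}) = 2^{-m}|\mathbf{U}_{i}|^{(n_{i}-m-1)/2}(d\mathbf{U}_{i})(\mathbf{P}_{i}d\mathbf{P}_{i}),
  \qquad \mathbf{P}_{i}\in\mathcal{V}_{m,n_{i}}.
$$
Thus $\bigwedge_{i=1}^{k}(d\mathbf{R}_{i})$ acquires the factor $2^{-mk}\prod_{i=1}^{k}|\mathbf{U}_{i}|^{(n_{i}-m-1)/2}$ together with the volume forms $\bigwedge_{i=1}^{k}(\mathbf{P}_{i}d\mathbf{P}_{i})$. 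Since $\mathbf{R}_{i}$ enters (\ref{mwP2}) only through $\mathbf{R}'_{i}\mathbf{R}_{i}$ --- both in the factor $|\mathbf{I}_{m}-\mathbf{R}'_{i}\mathbf{R}_{i}|^{-(n_{i}+m+1)/2}$ and inside the kernel via $(\mathbf{I}_{m}-\mathbf{R}'_{i}\mathbf{R}_{i})^{-1}\mathbf{R}'_{i}\mathbf{R}_{i}$ --- the semiorthogonal matrices $\mathbf{P}_{i}$ occur only in their own volume forms.

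Next I would integrate out the $\mathbf{P}_{i}$. By \citet[Theorem 2.1.15, p. 70]{mh:05} one has $\int_{\mathcal{V}_{m,n_{i}}}(\mathbf{P}_{i}d\mathbf{P}_{i}) = 2^{m}\pi^{mn_{i}/2}/\Gamma_{m}[n_{i}/2]$, and the iterated integral over $\mathbf{P}_{1},\dots,\mathbf{P}_{k}$ factorizes into the product of these, contributing $2^{mk}\pi^{m(n^{*}-n_{0})/2}/\prod_{i=1}^{k}\Gamma_{m}[n_{i}/2]$. The $2^{\pm mk}$ cancel, the $\pi$-powers combine with the $\pi^{n_{0}m/2}$ already in (\ref{mwP2}) to give $\pi^{mn^{*}/2}$, and the gamma factors assemble into $\prod_{i=0}^{k}\Gamma_{m}[n_{i}/2]$. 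This produces the stated joint density of $\mathbf{W}_{0},\mathbf{U}_{1},\dots,\mathbf{U}_{k}$ --- the multimatricvariate Wishart-beta type I distribution --- with each $\mathbf{R}'_{i}\mathbf{R}_{i}$ replaced by $\mathbf{U}_{i}$ (now ranging over $\mathbf{0}<\mathbf{U}_{i}<\mathbf{I}_{m}$).

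For the marginal (\ref{MMB1}) I would integrate out $\mathbf{W}_{0}$. Writing $\mathbf{B}=\mathbf{I}_{m}+\sum_{i=1}^{k}(\mathbf{I}_{m}-\mathbf{U}_{i})^{-1}\mathbf{U}_{i}$, the elliptical matrix integral of \citet[Corollary 5.1.5.1, p. 169]{gv:93} (equivalently \citet[Corollary 2, p. 34]{cdg:10}) gives $\int_{\mathbf{W}_{0}>0}|\mathbf{W}_{0}|^{(n^{*}-m-1)/2}h(\tr\mathbf{W}_{0}\mathbf{B})(d\mathbf{W}_{0}) = \Gamma_{m}[n^{*}/2]\,\pi^{-mn^{*}/2}|\mathbf{B}|^{-n^{*}/2}$, so the kernel $h$ disappears and the marginal becomes invariant over the elliptical family, as it must. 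It then remains to rewrite $|\mathbf{B}|^{-n^{*}/2}$ via
$$
  \left|\mathbf{I}_{m}+\sum_{i=1}^{k}(\mathbf{I}_{m}-\mathbf{U}_{i})^{-1}\mathbf{U}_{i}\right|
  = \frac{\left|\prod_{i=1}^{k}(\mathbf{I}_{m}-\mathbf{U}_{i})+\sum_{i=1}^{k}\prod_{\build{}{j=1}{j\neq i}}^{k}(\mathbf{I}_{m}-\mathbf{U}_{j})\mathbf{U}_{i}\right|}{\prod_{i=1}^{k}|\mathbf{I}_{m}-\mathbf{U}_{i}|},
$$
the analogue of the identity already used in the proof of Theorem \ref{teo4}; distributing the factor $\prod_{i}|\mathbf{I}_{m}-\mathbf{U}_{i}|^{n^{*}/2}$ it produces against the Jacobian powers $|\mathbf{I}_{m}-\mathbf{U}_{i}|$ collapses the $|\mathbf{I}_{m}-\mathbf{U}_{i}|$-exponents to $(\sum_{j\neq i}n_{j}-m-1)/2$ and yields (\ref{MMB1}).

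I expect the one genuinely delicate point to be that determinant identity: because the matrices $\mathbf{I}_{m}-\mathbf{U}_{i}$ need not commute, one cannot clear denominators naively, and the safest route is an induction on $k$ in which, at the $i$-th step, the relation $(\mathbf{I}_{m}-\mathbf{U}_{i})+\mathbf{U}_{i}=\mathbf{I}_{m}$ is used to strip off one factor $|\mathbf{I}_{m}-\mathbf{U}_{i}|^{-1}$ at a time while keeping careful track of the order of the matrix products. Everything else --- the two Jacobians, the Stiefel volumes, the elliptical matrix-gamma integral and the consolidation of constants --- is routine once (\ref{mwP2}) and (\ref{mP2}) are granted, so Theorems \ref{teo2}--\ref{teo4} carry the real weight.
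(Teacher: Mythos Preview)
Your proposal is correct and follows essentially the same approach as the paper: apply \citet[Theorem 2.1.14]{mh:05} to each $\mathbf{R}_{i}\mapsto\mathbf{U}_{i}=\mathbf{R}'_{i}\mathbf{R}_{i}$, then integrate the $\mathbf{P}_{i}$ over the Stiefel manifolds via \citet[Theorem 2.1.15]{mh:05}. The only cosmetic difference is that for the marginal (\ref{MMB1}) the paper applies this change of variables directly to (\ref{mP2}) --- where the determinant identity has already been absorbed --- whereas you instead integrate $\mathbf{W}_{0}$ out of the joint density and then invoke the identity; since (\ref{mP2}) was itself obtained from (\ref{mwP2}) by that very $\mathbf{W}_{0}$-integration, the two routes differ only in the order of two commuting operations.
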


Observe that Theorem \ref{teo5} can be derived from Theorem \ref{teo3} by taking $\mathbf{F} = (\mathbf{I}_{m} -
\mathbf{U})^{-1} - \mathbf{I}_{m}$, where
$\mathbf{F}$ has a matricvariate beta type II distribution and $\mathbf{U}$  has a matricvariate beta type I distribution.

\begin{remark}\label{rem1}
Note that the parameter domain of the  multimatricvariate generalised Wishart distribution can be
extended. This distribution is also valid when $n_{i}/2$ are replaced by $a_{i}$, $i = 1,\dots,k$, where
$N/2 = a = a_{1}+ \cdots+a_{k}$ and the $a_{i}$s are complex numbers with positive real part. In this
scenario, it will be called \emph{multimatricvariate generalised Gamma distribution}. The referred
extended domain holds for all the multimatricvariate distributions considered in this article, but the
geometric and/or statistical interpretation is difficult. For example, the multimatricvariate
Wishart-beta type I distribution can be termed as \emph{multimatricvariate gamma-beta type I
distribution}. We emphasize that the derived expressions correspond to  families of distributions indexed
by kernel functions $h(\cdot)$ with potential advantage for multiple real situations. We most note that
the extended domain hus losing the geometric and/or statistics interpretation of the parameters.
\end{remark}

\section{Some properties}\label{sec:4}

In this section we study several  properties of the multimatricvariate distributions. They include the
marginal distributions and the inverse distributions, among many others aspects.

The construction of the multimatricvariate distributions gives us a simple way of computing the
associated marginal distributions. For example if the multimatricvariate
$$
  (\mathbf{V}_{0},\mathbf{R}_{1}',\cdots, \mathbf{R}_{k}')'
$$
has a gamma-Pearson type II distribution, then,
$\mathbf{V}_{0}$ has a matricvariate gamma distribution and the $\mathbf{R}_{i}$s have a matricvariate
Pearson type II distributions for all $i = 1, \dots,n$.

Now, if $k = 2$ and the extended parameters  $n_{i}/2 = a_{i}$, $i = 0,1,2$ are considered in
(\ref{MMB1}), we have that  $n^{*}/2 = a_{0}+ a_{1}+a_{2}$ and
$$
  \left|\displaystyle\prod_{i=1}^{k}(\mathbf{I}_{m} - \mathbf{U}_{i}) + \sum_{i=1}^{k}\prod_{\build{}{j=1}
  {j\neq i}}^{k}(\mathbf{I}_{m} - \mathbf{U}_{j})\mathbf{U}_{i}\right|\hspace{7cm}
$$
$$
  = |(\mathbf{I}_{m} - \mathbf{U}_{1})(\mathbf{I}_{m}
  - \mathbf{U}_{2})+(\mathbf{I}_{m} - \mathbf{U}_{2})\mathbf{U}_{1}+(\mathbf{I}_{m} -
  \mathbf{U}_{1})\mathbf{U}_{2}| =|\mathbf{I}_{m} - \mathbf{U}_{1}\mathbf{U}_{2}|.
$$
Thus
$$
  dF_{\mathbf{U}_{1},\mathbf{U}_{2}}(\mathbf{U}_{1},\mathbf{U}_{2}) = \frac{\Gamma_{m}[a_{0}+ a_{1}+a_{2}]}
  {\Gamma_{m}[a_{0}]\Gamma_{m}[a_{1}] \Gamma_{m}[a_{2}]}|\mathbf{U}_{1}|^{a_{1}-(m+1)/2} |\mathbf{U}_{2}|^{a_{2}-(m+1)/2}
  \hspace{2cm}
$$
$$\hspace{2cm}
  \frac{|\mathbf{I}_{m} - \mathbf{U}_{1}|^{a_{0}+a_{2}-(m+1)/2}|\mathbf{I}_{m} - \mathbf{U}_{2}|^{a_{0}+a_{1}-(m+1)/2}}
  {|\mathbf{I}_{m} - \mathbf{U}_{1}\mathbf{U}_{2}|^{a_{_{0}}+ a_{1}+a_{2}}}
  (d\mathbf{U}_{1})\wedge(d\mathbf{U}_{2}).
$$
This is the bimatrix variate beta type I distribution obtained  by \cite{dggj:10a, dggj:10b} in the real
and complex cases (see also \citet{e:11}). The distribution was derived with an approach based on certain
independent variables, instead of the dependent scenario of our method. However, both distributions must
coincide given the known invariance property of the elliptical distributions, see \citep[Section 5.3, p.
182]{gv:93}.

In fact, we can derive a number of multimatricvariate distributions mixing three of more  types of joint densities.

\begin{theorem}\label{teo6}
Suppose that $\mathbf{X} = \left(\mathbf{X}'_{0}, \mathbf{X}'_{1}, \mathbf{X}'_{2} \right)'$ has a matrix
variate spherical distribution, with $\mathbf{X}_{i} \in \Re^{n_{i} \times m}$, $n_{i} \geq m$, $i =
0,1,2$. Define $\mathbf{W}_{0} = \mathbf{X}'_{0}\mathbf{X}_{0}$, $\mathbf{T} =
\mathbf{X}_{1}\mathbf{W}_{0}^{-1/2}$, $\mathbf{R}= \mathbf{X}_{2}(\mathbf{W}_{0} +
\mathbf{X}_{2}'\mathbf{X}_{2})^{-1/2}$ and $\mathbf{W} = \mathbf{W}_{0}+\mathbf{X}'_{2}\mathbf{X}_{2}$.
Then the joint density $dF_{\mathbf{W},\mathbf{T},\mathbf{R}}(\mathbf{W},\mathbf{T},\mathbf{R})$ is given
by
\begin{equation}\label{mwtP2}
   \frac{\pi^{n_{0}m/2}}{\Gamma_{m}[n_{0}/2]}|\mathbf{W}|^{\left(n^{*}-m-1\right)/2}
  h\left(\tr \mathbf{W}+ (\mathbf{I}_{m}- \mathbf{R}'\mathbf{R})\mathbf{W}^{1/2} \mathbf{T}'\mathbf{TW}^{1/2}\right)
$$
$$
  \hspace{2cm}\times  |\mathbf{I}_{m}- \mathbf{R}'\mathbf{R}|^{(n_{0}+n_{1}-m-1)/2}
  (d\mathbf{W})\wedge(d\mathbf{T})\wedge \left(d\mathbf{R}\right),
\end{equation}
where $n^{*} = n_{0}+n_{1}+n_{2}$, $\mathbf{W} > 0$, $\mathbf{T} \in \Re^{n_{1} \times m}$ and
$\mathbf{R} \in \Re^{n_{2} \times m}$. The distribution of $\mathbf{W}_{0},\mathbf{T},\mathbf{R}$ will be
termed \emph{trimatricvariate Wishart-T-Pearson type II distribution}. Moreover, setting  $\mathbf{F} =
\mathbf{T}'\mathbf{T}$ and $\mathbf{U} = \mathbf{R}'\mathbf{R}$, the density $dF_{\mathbf{W},\mathbf{F},
\mathbf{U}}(\mathbf{W}, \mathbf{F}, \mathbf{U})$ is
\begin{equation}\label{mwb1b2}
   \frac{\pi^{n^{*}m/2}|\mathbf{W}|^{\left(n^{*}-m-1\right)/2}}{\Gamma_{m}[n_{0}/2]\Gamma_{m}[n_{1}/2]\Gamma_{m}[n_{2}/2]}
  h\left(\tr \mathbf{W}+ (\mathbf{I}_{m}- \mathbf{U})\mathbf{W}^{1/2}
  \mathbf{FW}^{1/2}\right) \hspace{2cm}
$$
$$
  \hspace{2mm}\times  |\mathbf{I}_{m}- \mathbf{U}|^{(n_{0}+n_{1}-m-1)/2} |\mathbf{F}|^{(n_{1}-m-1)/2} |\mathbf{U}|^{(n_{2}-m-1)/2}
  (d\mathbf{W})\wedge(d\mathbf{F})\wedge \left(d\mathbf{U}\right),
\end{equation}
where $\mathbf{W} > 0$, $\mathbf{F} > 0$, and $\mathbf{U} > 0$. This density will be termed
\emph{trimatricvariate Wishart-beta type II-beta type I distribution}.
\end{theorem}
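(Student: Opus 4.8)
The plan is to assemble the joint density in two stages that recycle the mechanics of Theorems \ref{teo2} (the ``$\mathbf{T}$'' construction) and \ref{teo4} (the Pearson type~II ``$\mathbf{R}$'' construction), and then to read off (\ref{mwb1b2}) by integrating two Stiefel surface measures out of (\ref{mwtP2}). Starting from the spherical joint density $h(\tr(\mathbf{X}'_0\mathbf{X}_0+\mathbf{X}'_1\mathbf{X}_1+\mathbf{X}'_2\mathbf{X}_2))\bigwedge_{i=0}^{2}(d\mathbf{X}_i)$, Stage~1 repeats the proof of Theorem \ref{teo2}: set $\mathbf{W}_0=\mathbf{X}'_0\mathbf{X}_0$, $\mathbf{X}_0=\mathbf{H}_1\mathbf{W}_0^{1/2}$ with $\mathbf{H}_1\in\mathcal{V}_{m,n_0}$, and $\mathbf{T}=\mathbf{X}_1\mathbf{W}_0^{-1/2}$. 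By \citet[Theorems 2.1.4 and 2.1.14]{mh:05}, $\bigwedge_{i=0}^{2}(d\mathbf{X}_i)=2^{-m}|\mathbf{W}_0|^{(n_0+n_1-m-1)/2}(d\mathbf{W}_0)\wedge(\mathbf{H}'_1 d\mathbf{H}_1)\wedge(d\mathbf{T})\wedge(d\mathbf{X}_2)$, while $\tr(\mathbf{X}'_0\mathbf{X}_0+\mathbf{X}'_1\mathbf{X}_1)=\tr\mathbf{W}_0(\mathbf{I}_m+\mathbf{T}'\mathbf{T})$; integrating $\mathbf{H}_1$ over $\mathcal{V}_{m,n_0}$ via \citet[Theorem 2.1.15]{mh:05} leaves the joint density of $(\mathbf{W}_0,\mathbf{T},\mathbf{X}_2)$ equal to $\dfrac{\pi^{n_0m/2}}{\Gamma_m[n_0/2]}|\mathbf{W}_0|^{(n_0+n_1-m-1)/2}h(\tr\mathbf{W}_0(\mathbf{I}_m+\mathbf{T}'\mathbf{T})+\tr\mathbf{X}'_2\mathbf{X}_2)(d\mathbf{W}_0)\wedge(d\mathbf{T})\wedge(d\mathbf{X}_2)$.

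Stage~2 treats $\mathbf{T}$ as an inert spectator variable and changes $(\mathbf{W}_0,\mathbf{X}_2)\to(\mathbf{W},\mathbf{R})$ through $\mathbf{W}=\mathbf{W}_0+\mathbf{X}'_2\mathbf{X}_2$ and $\mathbf{X}_2=\mathbf{R}\mathbf{W}^{1/2}$. Factoring the map as $(\mathbf{W}_0,\mathbf{X}_2)\mapsto(\mathbf{W},\mathbf{X}_2)\mapsto(\mathbf{W},\mathbf{R})$, the first arrow is a triangular shift (Jacobian $1$) and the second has Jacobian $|\mathbf{W}|^{n_2/2}$ by \citet[Theorem 2.1.4]{mh:05}, with image $\{\mathbf{W}>0,\ \mathbf{R}'\mathbf{R}<\mathbf{I}_m\}$ because $\mathbf{W}_0=\mathbf{W}^{1/2}(\mathbf{I}_m-\mathbf{R}'\mathbf{R})\mathbf{W}^{1/2}$ must be positive definite. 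Substituting this identity yields $|\mathbf{W}_0|^{(n_0+n_1-m-1)/2}=|\mathbf{W}|^{(n_0+n_1-m-1)/2}|\mathbf{I}_m-\mathbf{R}'\mathbf{R}|^{(n_0+n_1-m-1)/2}$, and using cyclicity of the trace ($\tr\mathbf{X}'_2\mathbf{X}_2=\tr\mathbf{W}\mathbf{R}'\mathbf{R}$, $\tr\mathbf{W}_0=\tr\mathbf{W}(\mathbf{I}_m-\mathbf{R}'\mathbf{R})$, $\tr\mathbf{W}_0\mathbf{T}'\mathbf{T}=\tr(\mathbf{I}_m-\mathbf{R}'\mathbf{R})\mathbf{W}^{1/2}\mathbf{T}'\mathbf{T}\mathbf{W}^{1/2}$) the argument of $h$ collapses to $\tr\mathbf{W}+\tr(\mathbf{I}_m-\mathbf{R}'\mathbf{R})\mathbf{W}^{1/2}\mathbf{T}'\mathbf{T}\mathbf{W}^{1/2}$. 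Since $(n_0+n_1-m-1)/2+n_2/2=(n^{*}-m-1)/2$, the density of $(\mathbf{W},\mathbf{T},\mathbf{R})$ is precisely (\ref{mwtP2}).

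For (\ref{mwb1b2}) I keep $\mathbf{W}$ fixed and apply \citet[Theorem 2.1.14]{mh:05} to $\mathbf{T}\to\mathbf{F}=\mathbf{T}'\mathbf{T}$ and $\mathbf{R}\to\mathbf{U}=\mathbf{R}'\mathbf{R}$, introducing $\mathbf{G}_1\in\mathcal{V}_{m,n_1}$ and $\mathbf{P}_1\in\mathcal{V}_{m,n_2}$. The kernel argument and the factor $|\mathbf{I}_m-\mathbf{R}'\mathbf{R}|^{(n_0+n_1-m-1)/2}$ depend on $\mathbf{T},\mathbf{R}$ only through $\mathbf{F},\mathbf{U}$, so integrating $\mathbf{G}_1,\mathbf{P}_1$ out by \citet[Theorem 2.1.15]{mh:05} multiplies the constant by $\pi^{(n_1+n_2)m/2}/(\Gamma_m[n_1/2]\Gamma_m[n_2/2])$ and inserts the factors $|\mathbf{F}|^{(n_1-m-1)/2}|\mathbf{U}|^{(n_2-m-1)/2}$, giving (\ref{mwb1b2}).

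The main obstacle is Stage~2: one has to recognise that, unlike in Theorem \ref{teo4}, the appropriate ``Wishart block'' once $\mathbf{R}$ has been introduced is the aggregate $\mathbf{W}=\mathbf{W}_0+\mathbf{X}'_2\mathbf{X}_2$ rather than $\mathbf{W}_0$ itself; to get the Jacobian of $(\mathbf{W}_0,\mathbf{X}_2)\to(\mathbf{W},\mathbf{R})$ right (the shift part is free and the scaling $\mathbf{X}_2=\mathbf{R}\mathbf{W}^{1/2}$ contributes $|\mathbf{W}|^{n_2/2}$); and to rewrite $|\mathbf{W}_0|$ and the trace through $\mathbf{W}_0=\mathbf{W}^{1/2}(\mathbf{I}_m-\mathbf{R}'\mathbf{R})\mathbf{W}^{1/2}$ so that the asymmetric-looking factor $|\mathbf{I}_m-\mathbf{R}'\mathbf{R}|^{(n_0+n_1-m-1)/2}$ and the trace of (\ref{mwtP2}) emerge, all while tracking the domain constraint $\mathbf{R}'\mathbf{R}<\mathbf{I}_m$. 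Everything else is bookkeeping already carried out in Theorems \ref{teo2}--\ref{teo4}.
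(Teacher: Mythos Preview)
Your proof is correct and follows essentially the same route as the paper's own argument: start from the spherical density, pass to $(\mathbf{W}_{0},\mathbf{X}_{1},\mathbf{X}_{2})$ via \citet[Theorems 2.1.14--2.1.15]{mh:05}, then change to $(\mathbf{W},\mathbf{T},\mathbf{R})$ using the identities $|\mathbf{W}_{0}|=|\mathbf{W}|\,|\mathbf{I}_{m}-\mathbf{R}'\mathbf{R}|$ and the trace rewritings you record, and finally derive (\ref{mwb1b2}) by two further applications of the $\mathbf{X}\mapsto\mathbf{X}'\mathbf{X}$ decomposition. The only difference is organisational: you split the change of variables into two stages (first $\mathbf{T}$, then $(\mathbf{W},\mathbf{R})$ with $\mathbf{T}$ held fixed), whereas the paper performs $(\mathbf{W}_{0},\mathbf{X}_{1},\mathbf{X}_{2})\to(\mathbf{W},\mathbf{T},\mathbf{R})$ in one step; the Jacobian factors and the resulting density are identical.
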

\begin{proof}
The joint density function of $\mathbf{X}_{0},\mathbf{X}_{1},\mathbf{X}_{2}$ is
$$
   h\left\{\tr\left(\mathbf{X}'_{0}\mathbf{X}_{0} + \mathbf{X}'_{1}\mathbf{X}_{1} + \mathbf{X}'_{2}\mathbf{X}_{2}
  \right)\right\}(d\mathbf{X}_{0})\wedge (d\mathbf{X}_{1})\wedge(d\mathbf{X}_{2}).
$$
Define $\mathbf{W}_{0} = \mathbf{X}'_{0}\mathbf{X}_{0}$, hence
$$
  (d\mathbf{X}_{0}) =   2^{-m}|\mathbf{W}_{0}|^{(n_{0}-m-1)/2}(d\mathbf{W}_{0})(\mathbf{H}_{1}d\mathbf{H}_{1}),
$$
where $\mathbf{H}_{1} \in \mathcal{V}_{m,n_{0}}$. Now,  use
$$
  \int_{\mathbf{H}_{1} \in \mathcal{V}_{m,n_{0}}}(\mathbf{H}_{1}d\mathbf{H}_{1}) =
  \frac{2^{k}\pi^{n_{0}m/2}}{\Gamma_{m}[n_{0}/2]},
$$
the joint density function of $\mathbf{W}_{0},\mathbf{X}_{1},\mathbf{X}_{2}$ is
$$
  \frac{\pi^{n_{0}m/2}}{\Gamma_{m}[n_{0}/2]} |\mathbf{W}_{0}|^{(n_{0}-m-1)/2} h\left\{\tr\left(\mathbf{W}_{0}
   + \mathbf{X}'_{1}\mathbf{X}_{1}+ \mathbf{X}'_{2}\mathbf{X}_{2} \right)\right\}(d\mathbf{W}_{0})\wedge
   (d\mathbf{X}_{1})\wedge(d\mathbf{X}_{2}).
$$
Make the change of variables $\mathbf{T} = \mathbf{X}_{1}\mathbf{W}_{0}^{-1/2}$, $\mathbf{R}=
\mathbf{X}_{2}(\mathbf{W}_{0} + \mathbf{X}_{2}'\mathbf{X}_{2})^{-1/2}$ and $\mathbf{W} =
\mathbf{W}_{0}+\mathbf{X}'_{2}\mathbf{X}_{2}$. Therefore
$$
  (d\mathbf{W}_{0})\wedge (d\mathbf{X}_{1})\wedge(d\mathbf{X}_{2}) = |\mathbf{W}_{0}|^{n_{1}/2}
  |\mathbf{W}_{0}+ \mathbf{X}_{2}'\mathbf{X}_{2}|^{n_{2}/2}(d\mathbf{W})\wedge
  (d\mathbf{T})\wedge(d\mathbf{U}).
$$
Note that
$$
  |\mathbf{W}_{0}+ \mathbf{X}_{2}'\mathbf{X}_{2}| = |\mathbf{W}| \mbox{ and } |\mathbf{W}_{0}| =
  |\mathbf{W}||\mathbf{I}_{m}- \mathbf{R}'\mathbf{R}|.
$$
Also observe that
$$
  \tr \mathbf{W}_{0} = \tr \mathbf{W}(\mathbf{I}_{m}- \mathbf{R}'\mathbf{R}),
$$
$\tr \mathbf{X}'_{1}\mathbf{X}_{1} = \tr \mathbf{W}^{1/2}(\mathbf{I}_{m}-
\mathbf{R}'\mathbf{R})\mathbf{W}^{1/2} \mathbf{T}'\mathbf{T}$ and $\tr \mathbf{X}'_{2}\mathbf{X}_{2} =
\tr \mathbf{W}\mathbf{R}'\mathbf{R}$. Then, the first required result is obtained. Finally, if
$\mathbf{F} = \mathbf{T}'\mathbf{T}$ and $\mathbf{U} = \mathbf{R}'\mathbf{R}$, then the exterior product
$(d\mathbf{W})\wedge(d\mathbf{T})\wedge \left(d\mathbf{R}\right)$ is
$$
  = 2^{-2m} |\mathbf{F}|^{(n_{1}-m-1)/2} |\mathbf{U}|^{(n_{2}-m-1)/2} (\mathbf{H}'_{1}d\mathbf{H}_{1})
  (\mathbf{P}'_{1}d\mathbf{P}_{1})(d\mathbf{W})\wedge(d\mathbf{F})\wedge \left(d\mathbf{U}\right).
$$
And integrating $(\mathbf{H}'_{1}d\mathbf{H}_{1})$ and $(\mathbf{P}'_{1}d\mathbf{P}_{1})$ over
$\mathbf{H}_{1} \in \mathcal{V}_{m,n_{1}}$ and $\mathbf{P}_{1} \in \mathcal{V}_{m,n_{2}}$, respectively,
the second distribution is obtained.
\end{proof}

When the matrices in a multimatricvariate distribution are non singular, we can derive  the corresponding
inverse distributions. For example:

Suppose that $\mathbf{V}_{1}, \dots,\mathbf{V}_{k_{1}},\mathbf{V}_{k_{1}+1}, \dots,\mathbf{V}_{k_{2}}$
follow a multimatricvariate generalised Wishart distribution. Then their joint density  is
$$
    \pi^{Nm/2} \left(\prod_{i=1}^{k_{1}} \frac{|\mathbf{V}_{i}|^{(n_{i}-m-1)/2}}{\Gamma_{m}[n_{i}/2]
    |\mathbf{\Sigma}_{i}|^{n_{i}/2}}\right ) \left(\prod_{j=k_{1}+1}^{k2} \frac{|\mathbf{V}_{j}|^{(n_{j}-m-1)/2}}{\Gamma_{m}[n_{j}/2]
    |\mathbf{\Sigma}_{j}|^{n_{j}/2}}\right )\hspace{2cm}
$$
$$\hspace{2cm}
   \times h\left(\tr\left(\mathbf{\Sigma}_{1}^{-1}\mathbf{V}_{1}+ \cdots
    + \mathbf{\Sigma}_{k}^{-1}\mathbf{V}_{k_{1}}+\mathbf{\Sigma}_{k_{1}+1}^{-1}\mathbf{V}_{k_{1}+1}+ \cdots
    + \mathbf{\Sigma}_{k}^{-1}\mathbf{V}_{k_{2}}\right)\right)
$$
$$
   \hspace{9cm} \times   \bigwedge_{i=1}^{k_{1}}(d\mathbf{V}_{i})\bigwedge_{j=k_{1}+1}^{k_{2}}(d\mathbf{V}_{j}).
$$
Consider the change of variable $\mathbf{W}_{j} = \mathbf{V}_{j}^{-1}$, $j = k_{1}+1, \dots,k_{2}$, then
by \citet[Theorem 2.1.8, p. 65]{mh:05},
$$
  \bigwedge_{j=k_{1}+1}^{k_{2}}(d\mathbf{V}_{j}) = \prod_{j=k_{1}+1}^{k_{2}}|\mathbf{W}_{j}|^{-(m+1)/2}
  \bigwedge_{j=k_{1}+1}^{k_{2}}(d\mathbf{W}_{j}).
$$
Hence the joint density $dF_{\mathbf{V}_{1}, \dots,\mathbf{V}_{k_{1}},\mathbf{W}_{k_{1}+1},
\dots,\mathbf{W}_{k_{2}}}(\mathbf{V}_{1}, \dots,\mathbf{V}_{k_{1}},\mathbf{W}_{k_{1}+1},
\dots,\mathbf{W}_{k_{2}})$ is
\begin{equation}\label{mmWiW}
     \pi^{Nm/2} \left(\prod_{i=1}^{k_{1}} \frac{|\mathbf{V}_{i}|^{(n_{i}-m-1)/2}}{\Gamma_{m}[n_{i}/2]
    |\mathbf{\Sigma}_{i}|^{n_{i}/2}}\right ) \left(\prod_{j=k_{1}+1}^{k2} \frac{|\mathbf{W}_{j}|^{-(n_{j}+m+1)/2}}{\Gamma_{m}[n_{j}/2]
    |\mathbf{\Sigma}_{j}|^{n_{j}/2}}\right )\hspace{2cm}
$$
$$\hspace{2cm}
   \times h\left(\tr\left(\mathbf{\Sigma}_{1}^{-1}\mathbf{V}_{1}+ \cdots
    + \mathbf{\Sigma}_{k}^{-1}\mathbf{V}_{k_{1}}+\mathbf{\Sigma}_{k_{1}+1}^{-1}\mathbf{W}^{-1}_{k_{1}+1}+ \cdots
    + \mathbf{\Sigma}_{k}^{-1}\mathbf{W}^{-1}_{k_{2}}\right)\right)
$$
$$
   \hspace{9cm} \times   \bigwedge_{i=1}^{k_{1}}(d\mathbf{V}_{i})\bigwedge_{j=k_{1}+1}^{k_{2}}(d\mathbf{W}_{j}).
\end{equation}
This function will be termed \emph{multimatricvariate generalised Wishart-inverted Wishart distribution}.

In a similar way, assume that $\mathbf{F}_{1}, \dots,\mathbf{F}_{k_{1}},\mathbf{F}_{k_{1}+1},
\dots,\mathbf{F}_{k_{2}}$ have the density (\ref{MMVF}), then we obtain the so called
\emph{multimatricvariate generalised beta type I-inverted beta type I distribution}. Both results are
summarized next.

\begin{theorem}
Assume that $\mathbf{V}_{1}, \dots,\mathbf{V}_{k_{1}},\mathbf{V}_{k_{1}+1}, \dots,\mathbf{V}_{k_{2}}$
follow a multimatricvariate generalised Wishart distribution. And define $\mathbf{W}_{j} =
\mathbf{V}_{j}^{-1}$, $j = k_{1}+1, \dots,k_{2}$, then the joint density function of $\mathbf{V}_{1},
\dots, \mathbf{V}_{k_{1}},\mathbf{W}_{k_{1}+1}, \dots,\mathbf{W}_{k_{2}}$ is termed
\emph{multimatricvariate generalised Wishart-inverted Wishart distribution} and is given by
(\ref{mmWiW}). Similarly, if
$$
  \mathbf{F}_{1}, \dots,\mathbf{F}_{k_{1}},\mathbf{F}_{k_{1}+1},\dots,\mathbf{F}_{k_{2}}
$$
have  a \emph{multimatricvariate generalised beta type I distribution} and we
define $\mathbf{E}_{j} = \mathbf{F}_{j}^{-1}$ with $j = k_{1}+1, \dots,k_{2}$, then the joint density
function
$$
  dF_{\mathbf{F}_{1}, \dots,\mathbf{F}_{k_{1}},\mathbf{E}_{k_{1}+1},
\dots,\mathbf{E}_{k_{2}}}(\mathbf{F}_{1}, \dots,\mathbf{F}_{k_{1}},\mathbf{E}_{k_{1}+1},
\dots,\mathbf{E}_{k_{2}})
$$
is given by
\begin{equation}\label{MMFiF}
     \frac{\Gamma_{m}[n^{*}/2]}{\displaystyle\prod_{i=0}^{k_{2}}\Gamma_{m}[n_{i}/2]}
  \frac{\displaystyle\prod_{i=1}^{k_{1}}|\mathbf{F}_{i}|^{(n_{i}-m-1)/2}\prod_{j=k_{1}+1}^{k_{2}}|\mathbf{E}_{j}|^{-(n_{j}+m+1)/2}}{
  \left|\mathbf{I}_{m}+\displaystyle\sum_{i=1}^{k_{1}}\mathbf{F}_{i}+\sum_{j=k_{1}+1}^{k_{2}}\mathbf{E}^{-1}_{j}\right|^{n^{*}/2}}
  \bigwedge_{i=1}^{k_{1}}\left(d\mathbf{F}_{i}\right) \bigwedge_{i=k_{1}+1}^{k_{2}}\left(d\mathbf{E}_{j}\right),
\end{equation}
where $n^{*} = n_{0}+n_{1}+\cdots+n_{k_{1}}+n_{k_{1}+1}+\cdots+n_{k_{2}}$. This function will be termed
\emph{multimatricvariate generalised beta type I-inverted beta type I distribution}.
\end{theorem}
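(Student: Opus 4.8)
The plan is to prove this theorem by two independent but parallel Jacobian computations, one for the Wishart–inverted Wishart part and one for the beta type I–inverted beta type I part. For the first part, the derivation is essentially already carried out in the text preceding the statement: start from the multimatricvariate generalised Wishart density, perform the change of variables $\mathbf{W}_{j} = \mathbf{V}_{j}^{-1}$ for $j = k_{1}+1,\dots,k_{2}$, invoke \citet[Theorem 2.1.8, p.\ 65]{mh:05} to get $(d\mathbf{V}_{j}) = |\mathbf{W}_{j}|^{-(m+1)/2}(d\mathbf{W}_{j})$ for each such index, and substitute. Collecting the Jacobian factors $\prod_{j=k_{1}+1}^{k_{2}}|\mathbf{W}_{j}|^{-(m+1)/2}$ against the original $|\mathbf{V}_{j}|^{(n_{j}-m-1)/2} = |\mathbf{W}_{j}|^{-(n_{j}-m-1)/2}$ produces the exponent $-(n_{j}+m+1)/2$ appearing in (\ref{mmWiW}); the kernel argument picks up $\mathbf{\Sigma}_{j}^{-1}\mathbf{W}_{j}^{-1}$ in place of $\mathbf{\Sigma}_{j}^{-1}\mathbf{V}_{j}$. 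Since (\ref{mmWiW}) is displayed in the body of the paper, this half is a matter of asserting that the preceding lines constitute the proof.

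For the second part I would proceed analogously, starting from the multimatricvariate generalised beta type I density (\ref{MMVF}) — note (\ref{MMVF}) is labelled as beta type II in the excerpt, so I would be careful to use the generalised beta type I form consistent with the hypothesis, i.e.\ the density $\propto \prod_{i=1}^{k_{2}}|\mathbf{F}_{i}|^{(n_{i}-m-1)/2}\,|\mathbf{I}_{m}+\sum_{i}\mathbf{F}_{i}|^{-n^{*}/2}$ with the normalising constant $\Gamma_{m}[n^{*}/2]/\prod_{i=0}^{k_{2}}\Gamma_{m}[n_{i}/2]$. Apply $\mathbf{E}_{j} = \mathbf{F}_{j}^{-1}$ for $j = k_{1}+1,\dots,k_{2}$, using again \citet[Theorem 2.1.8]{mh:05} to get $(d\mathbf{F}_{j}) = |\mathbf{E}_{j}|^{-(m+1)/2}(d\mathbf{E}_{j})$. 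Combining with $|\mathbf{F}_{j}|^{(n_{j}-m-1)/2} = |\mathbf{E}_{j}|^{-(n_{j}-m-1)/2}$ gives the exponent $-(n_{j}+m+1)/2$ in (\ref{MMFiF}), and the determinant in the denominator becomes $|\mathbf{I}_{m}+\sum_{i=1}^{k_{1}}\mathbf{F}_{i}+\sum_{j=k_{1}+1}^{k_{2}}\mathbf{E}_{j}^{-1}|^{n^{*}/2}$, exactly (\ref{MMFiF}). The normalising constant is unchanged because the Jacobian is absorbed cleanly into the existing power of the determinants of the transformed blocks.

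The only substantive point to check — and the place where I expect the mild obstacle to lie — is the bookkeeping of the determinant exponents, specifically that $(n_{j}-m-1)/2 + (m+1)/2 = n_{j}/2$ and that the inverse picks up the full $-(n_{j}+m+1)/2$: writing $|\mathbf{V}_{j}|^{(n_{j}-m-1)/2}|\mathbf{W}_{j}|^{-(m+1)/2} = |\mathbf{W}_{j}|^{-(n_{j}-m-1)/2-(m+1)/2} = |\mathbf{W}_{j}|^{-(n_{j}+m+1)/2}$, and similarly for $\mathbf{E}_{j}$. I would verify this once explicitly and note it covers both claims. A secondary subtlety is that the multiple change of variables factorises across the independent blocks $j = k_{1}+1,\dots,k_{2}$, so the exterior products split as $\bigwedge_{j}(d\mathbf{V}_{j}) = \prod_{j}|\mathbf{W}_{j}|^{-(m+1)/2}\bigwedge_{j}(d\mathbf{W}_{j})$, which is immediate since each $\mathbf{V}_{j}$ depends only on its own $\mathbf{W}_{j}$.

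\begin{proof}
The first assertion was established in the discussion preceding the theorem: starting from the multimatricvariate generalised Wishart density and applying the change of variables $\mathbf{W}_{j} = \mathbf{V}_{j}^{-1}$, $j = k_{1}+1,\dots,k_{2}$, together with \citet[Theorem 2.1.8, p.\ 65]{mh:05}, one obtains $(d\mathbf{V}_{j}) = |\mathbf{W}_{j}|^{-(m+1)/2}(d\mathbf{W}_{j})$, and since $|\mathbf{V}_{j}|^{(n_{j}-m-1)/2} = |\mathbf{W}_{j}|^{-(n_{j}-m-1)/2}$, the combined power of $|\mathbf{W}_{j}|$ is $-(n_{j}-m-1)/2-(m+1)/2 = -(n_{j}+m+1)/2$, yielding (\ref{mmWiW}).

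For the second assertion, begin with the multimatricvariate generalised beta type I density of $\mathbf{F}_{1},\dots,\mathbf{F}_{k_{2}}$, namely
$$
  \frac{\Gamma_{m}[n^{*}/2]}{\displaystyle\prod_{i=0}^{k_{2}}\Gamma_{m}[n_{i}/2]}
  \prod_{i=1}^{k_{2}}|\mathbf{F}_{i}|^{(n_{i}-m-1)/2}
  \left|\mathbf{I}_{m}+\displaystyle\sum_{i=1}^{k_{2}}\mathbf{F}_{i}\right|^{-n^{*}/2}
  \bigwedge_{i=1}^{k_{2}}\left(d\mathbf{F}_{i}\right),
$$
where $n^{*} = n_{0}+n_{1}+\cdots+n_{k_{2}}$. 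Define $\mathbf{E}_{j} = \mathbf{F}_{j}^{-1}$ for $j = k_{1}+1,\dots,k_{2}$. Since each $\mathbf{F}_{j}$ depends only on $\mathbf{E}_{j}$, the exterior products factorise across blocks, and by \citet[Theorem 2.1.8, p.\ 65]{mh:05},
$$
  \bigwedge_{j=k_{1}+1}^{k_{2}}(d\mathbf{F}_{j}) = \prod_{j=k_{1}+1}^{k_{2}}|\mathbf{E}_{j}|^{-(m+1)/2}
  \bigwedge_{j=k_{1}+1}^{k_{2}}(d\mathbf{E}_{j}).
$$
Using $|\mathbf{F}_{j}|^{(n_{j}-m-1)/2}|\mathbf{E}_{j}|^{-(m+1)/2} = |\mathbf{E}_{j}|^{-(n_{j}-m-1)/2-(m+1)/2} = |\mathbf{E}_{j}|^{-(n_{j}+m+1)/2}$ and $\sum_{i=1}^{k_{2}}\mathbf{F}_{i} = \sum_{i=1}^{k_{1}}\mathbf{F}_{i} + \sum_{j=k_{1}+1}^{k_{2}}\mathbf{E}_{j}^{-1}$, the density transforms into (\ref{MMFiF}), which completes the proof.
\end{proof}
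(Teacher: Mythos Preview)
Your approach is exactly the paper's: the Wishart--inverted Wishart half is the displayed computation preceding the theorem, and the beta half is the ``in a similar way'' argument applied to (\ref{MMVF}). However, the one step you flagged as the ``substantive point to check'' contains an arithmetic slip. For an $m\times m$ symmetric positive definite matrix, the inversion Jacobian (Muirhead, Theorem~2.1.8) is
\[
(d\mathbf{V}_{j}) = |\mathbf{W}_{j}|^{-(m+1)}\,(d\mathbf{W}_{j}),
\]
with exponent $-(m+1)$, not $-(m+1)/2$. (The displayed line in the paper just before the theorem carries the same typo, which presumably misled you.) With the exponent you wrote, your combination
\[
-(n_{j}-m-1)/2-(m+1)/2 = -n_{j}/2,
\]
not $-(n_{j}+m+1)/2$ as you asserted; the arithmetic only works with the correct Jacobian,
\[
-(n_{j}-m-1)/2-(m+1) = -(n_{j}+m+1)/2.
\]
Once this is fixed, both halves of your proof go through verbatim and match the paper's derivation.
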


\section{Example}\label{sec:5}

Consider a sample $\mathbf{F}_{1}, \dots,\mathbf{F}_{k}$, of $2 \times 2$ random matrices taken from  a
matricvariate beta type II distribution with parameters $n_{0}$ and $n_{1}$.  By Remark \ref{rem1} we
extend the parametric space from $(n_{0},n_{1})$ to $(a_{0},a)$, then the corresponding likelihood
function $L(a_{0}, a; \mathbf{F}_{1}, \cdots, \mathbf{F}_{k})$ is
$$
   \left\{
  \begin{array}{ll}\displaystyle
    \prod_{j=1}^{k} f_{\mathbf{F}_{j}}(\mathbf{F}_{i}; a_{0},a),
     & \hbox{if the $\mathbf{F}_{i's}$ are independent;} \\
    f_{\mathbf{F}_{1}, \cdots, \mathbf{F}_{k}}(\mathbf{F}_{1}, \cdots, \mathbf{F}_{k};a_{0},a),
    & \hbox{if the $\mathbf{F}_{i's}$ are dependent.}
  \end{array}
\right.
$$
Here $f_{\mathbf{F}_{1}, \cdots, \mathbf{F}_{k}}(\mathbf{F}_{1}, \cdots, \mathbf{F}_{k};a_{0}, a)$ follow
from (\ref{MMVF}) by taking $a_{1}=\cdots=a_{k} = a$. And $f_{\mathbf{F}_{i}}(\mathbf{F}_{i};a_{0},a)$
comes from (\ref{MMVF}) by setting $k=1$ and denoting $a_{i} = a$ for all $i= 1,\dots,k$. Explicitly, the
likelihood function is:
$$
    \frac{\left(\Gamma_{m}[a_{0}+a]\right)^{k}}{\left(\Gamma_{m}[a_{0}]\right)^{k}\left(\Gamma_{m}[a]\right)^{k}}
  \prod_{j=1}^{k}|\mathbf{F}_{j}|^{a-(m+1)/2}\prod_{j=1}^{k}\left|\mathbf{I}_{m}+\mathbf{F}_{j}\right|^{-(a_{0}+a)},
$$
in the independent case, and
$$
   = \frac{\Gamma_{m}[a_{0}+ka]}{\Gamma_{m}[a_{0}](\Gamma_{m}[a])^{k}}
  \prod_{j=1}^{k}|\mathbf{F}_{j}|^{a-(m+1)/2}
  \left|\mathbf{I}_{m}+\displaystyle\sum_{j=1}^{k}\mathbf{F}_{j}\right|^{-(a_{0}+ka)},
$$
in the dependent case. Recall that $a^{*} = a_{0}+ka$.

Inference with real data in  matrix variate distribution theory  is rarely published. Thus, in this
section we  show that performing inference with the derived densities  is a possible task. There are a
number of data that we can use. In particular the statistical theory of shape provides interesting
situations to be applied. Most of the applications involve time independent configurations of landmarks
in two dimensions. In this example we consider the location in angstroms of a part of a 3D DNA molecule
with 22 phosphorous atoms. The positions of the atoms are reported in 30 different times. It is clear
that the configurations are highly correlated, then a dependent model rules the motion of the molecule.
Statistical theory of shape can describe for example the  shape evolution or the shape variability in
mean and covariance, or the configuration space, among many others, see \citet{DM98} and the references
therein.

The original sample consists of $k = 30$ matrices of order $22\times 3$,  which turns into a sample of
$3\times 3$  beta type II matrices, $\mathbf{F}_{1}, \dots, \mathbf{F}_{30}$. We want to estimate the
parameters $a_{0}$ and $a$ under the realistic dependent model. However, we also give the estimation of
the independent case for a reference of the involved scales of the parameters and the numerical
performance.

As we expect,  the optimization algorithm  depends on feasible seeds.  Given that we do not have
suggestions for an appropriate domain of $a_{0}$ and $a$, we use the corresponding estimates in the
univariate beta type II distribution.

Using standard packages for local  optimization, as Optimx of R, and Price's method for global
optimization, we find the following maximum likelihood (ML) estimation of the $3\times 3$
multimatricvariate beta type II distribution under dependence:
$$
a_{0}=0.04724072\quad\mbox{and}\quad a=0.1595563.
$$
With that model we can perform, for example, simulations of the movement of the associated matricvariate
beta type II random matrix of the molecule. The inverse problem also can be considered in order to
describe the shape variability of the original $22\times 3$ configuration  throughout larger scales of
time by using a technique provided in theorem 3.8 of \cite{dgcl:17}. In this case we use the fact that
$\mathbf{F} = \mathbf{X}'\mathbf{X}$ and we can return to $\mathbf{X}$ from $\mathbf{F}$.

Only for a comparison of the scales in the ML estimates, the local and global optimization routines in
the independence model converge to:
$$
a_{0}=2.068213\quad\mbox{and}\quad a=4101.160.
$$

Recall that both models cannot be compared because they correspond to different situations, in this case,
the intrinsic dependence in the objects is out of any discussion.

Finally, note that the multimatricvariate beta type II distribution can be seen as  a generalized Wishart
distribution.  Let $\mathbf{X}$ be a matricvariate T distribution, then the matrix $\mathbf{F} =
\mathbf{X}'\mathbf{X}$ follows a generalized  Wishart distribution which coincides with the
multimatricvariate beta type II distribution, \citet{di:67}.

\section{Conclusions}
\begin{enumerate}
  \item Multiple partitions of a matrix variate elliptical distribution generate an uncountable list of
     dependent models with a priori defined marginal distributions.
  \item The extended parametric spaces in  multimatricvariate distributions allow new perspectives for
     dependent models. They also propose new challenges for geometrical and statistical interpretation.
  \item The multimatricvariate distributions can be used for likelihood functions of dependent random samples.
    This novelty seems to be out of the multimatrix literature, which is based on certain independent scenarios.
\end{enumerate}



\end{document}